\newcommand\Del[1]{{\color{red}\ifmmode\cancel{#1}\else\sout{#1}\fi}}
\setlist[enumerate]{label={\rm(\roman*)}}
\theoremstyle{plain}
\newtheorem{thm}{Theorem}
\newtheorem{proposition}[thm]{Proposition}
\newtheorem{lemma}[thm]{Lemma}
\newtheorem{cor}[thm]{Corollary}
\theoremstyle{definition}
\newtheorem{defn}[thm]{Definition}
\newtheorem{rem}[thm]{Remark}
\newtheorem{example}[thm]{Example}
\numberwithin{thm}{section}
\numberwithin{equation}{section}
\let\expandafter\oldproof\csname\string\proof\endcsname
\let\oldendproof\endproof
\renewenvironment{proof}[1][\proofname]{%
  \oldproof[\bf #1]%
}{\oldendproof}
\def\M{\mathscr M}
\def\N{\mathbb N}
\def\R{\mathbb R}
\newcommand{\RR}{\mathscr{R}}
\newcommand{\abs}[1]{\left|{#1}\right|}
\newtoks\by
\newtoks\paper
\newtoks\book
\newtoks\jour
\newtoks\yr
\newtoks\pages
\newtoks\vol
\newtoks\publ
\newtoks\eds
\newtoks\proc
\newtoks\no
\def\ota{{\hbox{???}}}
\def\cLear{\by=\ota\paper=\ota\book=\ota\jour=\ota\yr=\ota
\pages=\ota\vol=\ota\publ=\ota}
\def\endpaper{\the\by, \textit{\the\paper},
{\the\jour} \textbf{\the\vol} (\the\yr), \the\pages.\cLear}
\def\endbook{\the\by, \textit{\the\book}, \the\publ.\cLear}
\def\endprep{\the\by, \textit{\the\paper}, \the\jour.\cLear}
\def\endproc{\the\by, \textit{\the\paper}, \the\publ, \the\pages.\cLear}
\def\name#1#2{#1 #2}
\def\et{ and }
\begin{document}

\title[Characterization of functions with zero traces \dots]{Characterization of functions with zero traces via the distance function and Lorentz spaces}

\author{Ale\v s Nekvinda and Hana Tur\v cinov\'a}

\address{
 Ale\v s Nekvinda, Czech Technical University in Prague, Faculty of Civil Engineering, Department of Mathematics, Th\'akurova~7, 166~00 Praha~6, Czech Republic}
\email{ales.nekvinda@cvut.cz}
\urladdr{0000-0001-6303-5882}

\address{Hana Tur\v{c}inov\'{a}, Charles University, Faculty of Mathematics and Physics, Department of Mathematical Analysis, Sokolovsk\'a~83, 186~75 Praha~8, Czech Republic}
\email{turcinova@karlin.mff.cuni.cz}
\urladdr{0000-0002-5424-9413}

\subjclass[2020]{46E35, 46E30}
\keywords{Sobolev spaces, Lorentz spaces, zero traces, distance function.}

\thanks{This research was supported in part by the grant P201-18-00580S of the Czech Science Foundation.
The research of Ale\v s Nekvinda was supported by the European Regional Development Fund, project No.~CZ 02.1.01/0.0/0.0/16-019/0000778.
The research of H.~Tur\v cinov\'{a} was supported in part by the Charles University, project GA UK No. 358120, by the Primus research programme PRIMUS/21/SCI/002 of Charles University, by the Danube Region Grant No.~8X2043 of the Czech Ministry of Education, Youth and Sports, and Charles University Research program No.~UNCE/SCI/023.}

\begin{abstract}
Consider a regular domain $\Omega \subset \R^N$ and let $d(x)=\operatorname{dist}(x,\partial\Omega)$. Denote $L^{1,\infty}_a(\Omega)$ the space of functions from $L^{1,\infty}(\Omega)$ having absolutely continuous quasinorms. This set is essentially smaller than $L^{1,\infty}(\Omega)$ but, at the same time, essentially larger than a union of all $L^{1,q}(\Omega)$, $q\in[1,\infty)$.

A classical result of late 1980's states that for $p\in (1,\infty)$ and $m \in \mathbb{N}$, $u$ belongs to the Sobolev space  $W^{m,p}_0(\Omega)$ if and only if $u/d^m\in L^p(\Omega)$ and $\left|\nabla^m u\right|\in L^p(\Omega)$. During the consequent decades, several authors have spent considerable effort in order to relax the characterizing condition. Recently, it was proved that $u\in W^{m,p}_0(\Omega)$ if and only if $u/d^m\in L^1(\Omega)$ and $\left|\nabla^m u\right|\in L^p(\Omega)$. In this paper we show that for $N\geq1$ and $p\in(1,\infty)$ we have $u\in W^{1,p}_0(\Omega)$ if and only if $u/d\in L^{1,\infty}_a(\Omega)$ and $\left|\nabla u\right|\in L^p(\Omega)$. Moreover, we present a counterexample which demonstrates that after relaxing the condition $u/d\in L^{1,\infty}_a(\Omega)$ to $u/d\in L^{1,\infty}(\Omega)$ the equivalence no longer holds.
\end{abstract}

\date{\today}

\maketitle

\bibliographystyle{alpha}

\section{Introduction}

Sobolev spaces play an outstanding role in modern analysis and enjoy a wide array of applications mainly in partial differential equations, calculus of variations and mathematical physics. For $m,N\in\N$, $p\in[1,\infty)$ and an open subset $\Omega$ of the ambient Euclidean space $\R^N$, the classical Sobolev space $W^{m,p}(\Omega)$ is given as the collection of all $m$-times weakly differentiable functions $u\colon \Omega\to\R$ such that $|D^{\alpha}u|\in L^p(\Omega)$ for every multiindex $\alpha$ of order not exceeding $m$, in which $|v|$ stands for the Euclidean norm of the vector $v\in\R^N$. In connection with certain specific tasks, most notable of which is the Dirichlet problem, the subset $W^{m,p}_0(\Omega)$ of $W^{m,p}(\Omega)$, sheltering those functions $u\in W^{m,p}(\Omega)$, which in an appropriate sense vanish on the boundary of $\Omega$, is of crucial importance.

The literature is not unique as for the precise definition of $W^{m,p}_0(\Omega)$, but we can safely say that $W^{m,p}_0(\Omega)$ is classically defined as a closure in $W^{m,p}(\Omega)$ of the set of smooth functions having compact supports in $\Omega$. This definition is somewhat theoretical and does not always necessarily meet the demands of numerous applications. Therefore, a considerable effort has been spent by many authors in order to describe such space in a more manageable way, which would possibly turn out to be more practical. One of the most effective approaches to this task makes use of the so-called distance function $d$, defined on $\Omega$ as the distance from its boundary. It is proved in \cite[Theorem V.3.4]{edmunds1987} that for any domain $\Omega\subset\R^N$ different from $\R^N$ itself, any order $m\in\N$ and every $p\in(1,\infty)$, the following implication holds:
\begin{equation*}
    \text{if $u\in W^{m,p}(\Omega)$ and $\frac{u}{d^m}\in L^p(\Omega)$, then $u\in W^{m,p}_0(\Omega)$.}
\end{equation*}
It is worth noticing that, as stated in~\cite[Remark V.3.5]{edmunds1987}, the proof of this result, due to D.J. Harris, is based on the boundedness of the Hardy--Littlewood maximal operator on Lebesgue spaces, and therefore it requires $p>1$. The authors however mention that a result for $p=1$ is available, too, using the Whitney decomposition theorem instead. This approach is due to C. Kenig.

What is even more interesting is the fact that the implication can be reversed in a sense, that is, $u\in W^{m,p}_0(\Omega)$ implies $\frac{u}{d^m}\in L^p(\Omega)$. In the case when $\Omega$ has Lipschitz boundary, this result appears already in~\cite[Theorem~1]{kadlec1966}. For $p=2$, it is pointed out in~\cite[Remark X.6.8]{edmunds1987} for sufficiently regular domains (examples of which are given there in terms of certain modified cone conditions). The proof is based on the fact that, for regular domains, one can equivalently replace the distance function by the mean distance function. Further results in this direction can be found in~\cite{Bal:15},
in which the case of $p\in(1,\infty)$ is treated, and the equivalence of the distance function to the mean one is established for convex domains.

In summary, we have the equivalence
\begin{equation}\label{E:the-ekvivalence}
    u\in W^{1,p}_0(\Omega)\qquad\text{if and only if}\qquad \frac{u}{d}\in L^{p}(\Omega)\,\,\text{and}\,\,u\in W^{1,p}(\Omega),
\end{equation}
where $\Omega$ is a Lipschitz domain.

The characterization of the space $u\in W^{m,p}_0(\Omega)$ in terms of the distance function was later improved several times. An enormous effort has been spent on weakening the integrability conditions on $\frac{u}{d}$, especially for the first-order Sobolev spaces. In \cite{kinnunen1997}, which is mainly focused on relations between the Hardy inequality and the membership to $W^{1,p}_0(\Omega)$ of functions in $W^{1,p}(\Omega)$, it was shown that the $p$-integrability of $\frac{u}{d}$ can be in fact replaced by its corresponding weak integrability, and one still gets the same conclusion. Namely, for $p\in(1,\infty)$, one has,
\begin{equation}\label{E:the-ekvivalence-b}
    u\in W^{1,p}_0(\Omega)\qquad\text{if and only if}\qquad \frac{u}{d}\in L^{p,\infty}(\Omega)\,\,\text{and}\,\,u\in W^{1,p}(\Omega),
\end{equation}
in which the `if' part needs no restriction on the open set $\Omega$, while the `only if' part requires that $\R^N\setminus \Omega$ is uniformly $p$-fat (for the precise definition, see~\cite[p.~490]{kinnunen1997}). Note that this requirement is satisfied for instance by every Lipschitz domain, and also by every domain having the outer cone property.

The fact that the equivalence in~\eqref{E:the-ekvivalence} is preserved when the condition on $\frac{u}{d}$ is substantially weakened as in~\eqref{E:the-ekvivalence-b} may seem somewhat surprising. It demonstrates the strength of the condition $\left|\nabla u\right|\in L^p(\Omega)$. Note that this result also yields that $\frac{u}{d}\in L^{p,\infty}(\Omega)$ and $\left|\nabla u\right|\in L^p(\Omega)$ implies $\frac{u}{d}\in L^{p}(\Omega)$. We would like to point out that for regular domains one has $u\in W^{1,p}(\Omega)$ if and only if $|\nabla u|\in L^p(\Omega)$.

This leads us to an interesting general problem: Find as large structure $X(\Omega)$ as possible such that, for $p\in(1,\infty)$, the equivalence
$$
u\in W^{m,p}_0(\Omega)\qquad\text{if and only if}\qquad \frac{u}{d^m}\in X(\Omega)\,\,\text{and}\,\,\left|\nabla u\right|\in L^p(\Omega)
$$
holds.

In \cite{edmunds2015}, the choice $X(\Omega)=L^{p,\infty}(\Omega)$ was further weakened. Before stating the result, we first need to introduce a geometric property of a domain. We say that a bounded domain $\Omega$ has the \textit{outer ball portion property}, if there exist positive constants $b$ and $r_0$ such that, for all $x\in\partial\Omega$ and all $r \in \left(0, r_0\right]$, one has
\begin{equation*}
    \frac{\lambda^N\left(B(x,r)\cap (\R^N\setminus\Omega)\right)}{\lambda^N\left(B(x,r)\right)}\geq b.
\end{equation*}
We recall that domains enjoying this property can be also found in \cite{hajlasz1999}, but the actual term \textit{outer ball portion property} is for the first time being introduced here. Now we can return to our problem. It was proved in \cite{edmunds2015}, at least in the case $m=1$ and for $\Omega$ having the outer ball portion property, that one can choose $X(\Omega)=L^1(\Omega)$, i.e.
$$
u\in W^{1,p}_0(\Omega)\qquad\text{if and only if}\qquad \frac{u}{d}\in L^{1}(\Omega)\,\,\text{and}\,\,\left|\nabla u\right|\in L^p(\Omega).
$$
Moreover, more general treatise involving variable exponent spaces $L^{p(\cdot)}$ can be found in that paper. The last mentioned result was extended in \cite{edmunds2017} to the Sobolev spaces of higher order. More precisely, for $\Omega$ having the outer ball portion property, one has
$$
u\in W^{m,p(\cdot)}_0(\Omega)\qquad\text{if and only if}\qquad \frac{u}{d^m}\in L^1(\Omega)\,\,\text{and}\,\,\left|\nabla^m u\right|\in L^{p(\cdot)}(\Omega).
$$
Further development, this time using Lorentz spaces, can be traced to~\cite{MGRTurcinova}, in which it was proved that for $m=1$ and $q\in[1,\infty)$ we can take $X(\Omega)=L^{1,q}(\Omega)$, i.e.
$$
u\in W^{1,p}_0(\Omega)\qquad\text{if and only if}\qquad \frac{u}{d}\in L^{1,q}(\Omega)\,\,\text{and}\,\,\left|\nabla u\right|\in L^p(\Omega),
$$
where $\Omega$ is a domain with Lipschitz boundary.

The main goal of this paper is to prove, for $m=1$, that it suffices to choose $X(\Omega)=L^{1,\infty}_a(\Omega)$, where $L^{1,\infty}_a(\Omega)$ is the space of functions from $L^{1,\infty}(\Omega)$, whose norm is absolutely
continuous. We will also show that $X(\Omega)=L^{1,\infty}(\Omega)$ does not work, whence we provide a comprehensive answer in the scale of two-parametric Lorentz spaces, establishing moreover certain estimate of the position of
the threshold marking how far one can go with weakening of the condition on $\frac{u}{d}$. We would like to stress that the space $L^{1,\infty}_a(\Omega)$ is essentially larger than any $L^{1,q}(\Omega)$ for $q<\infty$, and, at the
same time, it is essentially smaller than $L^{1,\infty}(\Omega)$, so the result makes perfect sense.  Aside from these considerations, we shall also focus on the role of the regularity of $\Omega$. In doing so, we shall make
effective use of the so-called isoperimetric function of $\Omega$ (detailed definitions will be given below). Precisely, we will prove the following result.

\begin{thm}\label{T:main}
Let $p\in (1,\infty)$ and let $\Omega\subset\R^N$ be a domain obeying simultaneously the outer ball portion property and the estimate
\begin{equation}\label{E:iso}
  I_{\Omega}(s)\geq C s\quad\text{near $0$,}
\end{equation}
where $I_{\Omega}$ is the isoperimetric function of $\Omega$. Then
$$
\left|\nabla u\right| \in L^p(\Omega)\qquad\text{and}\qquad\frac{u}{d}\in L^{1,\infty}_a(\Omega)
$$
if and only if
$$
u\in W^{1,p}_0(\Omega).
$$
\end{thm}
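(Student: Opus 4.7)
The ``only if'' direction is immediate: the Hardy inequality, valid under the outer ball portion property (see e.g.~\cite{kinnunen1997}), yields $\|u/d\|_{L^p(\Omega)} \leq C\|\nabla u\|_{L^p(\Omega)}$ whenever $u\in W^{1,p}_0(\Omega)$, and since $\Omega$ is bounded we obtain the chain $u/d \in L^p(\Omega) \subset L^1(\Omega) \subset L^{1,\infty}_a(\Omega)$, the last inclusion following because the $L^1$-norm is absolutely continuous and dominates the $L^{1,\infty}$-quasinorm.

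For the ``if'' direction, the plan is to reduce matters to the result of~\cite{edmunds2015}, which asserts that the stronger conclusion $u/d \in L^1(\Omega)$ together with $|\nabla u| \in L^p(\Omega)$ forces $u \in W^{1,p}_0(\Omega)$ under the outer ball portion property. The task therefore becomes to upgrade the hypothesis $u/d \in L^{1,\infty}_a(\Omega)$ to $u/d \in L^1(\Omega)$, using only the remaining assumptions. The natural device is the family of truncations $u_t := \operatorname{sgn}(u)\min(|u|,td)$, each of which lies in $W^{1,p}_0(\Omega)$ by~\cite{edmunds2015} (since $|u_t|/d \leq t$ and $|\nabla u_t|\leq |\nabla u| + t$). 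I plan to analyze the tail of the distribution function $\mu(s):=|\{x\in\Omega:|u(x)|/d(x)>s\}|$ through two complementary tools. On the one hand, the absolute continuity of $\|u/d\|_{L^{1,\infty}}$ delivers $s\mu(s)\to 0$ as $s\to\infty$. On the other hand, the linear isoperimetric estimate $I_\Omega(s)\geq Cs$ near $0$ yields a Cheeger-type inequality $\|v\|_{L^1}\leq C\|\nabla v\|_{L^1}$ for functions $v$ whose support has small measure, which I would apply to the excess $v_t := (|u|-td)_+$ (whose support $\{|u|>td\}$ has measure $\mu(t)\to 0$). Combined with the H\"older estimate $\|\nabla v_t\|_{L^1}\leq \|\nabla u\|_{L^p}\mu(t)^{1/p'}+t\mu(t)$, this should produce control of $\int_t^\infty \mu(s)\,\mathrm{d}s$ as $t\to\infty$, giving the desired $L^1$-integrability of $u/d$.

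The main obstacle I anticipate is the tight interplay between these ingredients. Applying Hardy directly to $u_t$ and raising to the $p$-th power gives only a near-tautological estimate, since $t^p\mu(t)$ controls $p\int_0^t s^{p-1}\mu(s)\,\mathrm{d}s$ from below, and the decay $s\mu(s)\to 0$ from absolute continuity alone is too weak to overcome the factor $t^{p-1}$ created by the exponent. The point of the plan is to bypass Hardy at exponent $p$ and to work at exponent $1$ via the Cheeger-type inequality applied to the excesses $v_t$; this is precisely where the linear isoperimetric assumption becomes indispensable, and where most of the technical work will have to go. Once $u/d\in L^1(\Omega)$ is secured, invoking~\cite{edmunds2015} completes the proof.
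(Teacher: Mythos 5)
Your ``only if'' direction is correct and is essentially the paper's: the Hardy inequality valid under the outer ball portion property gives $u/d\in L^p(\Omega)$, and then $L^p(\Omega)\hookrightarrow L^1(\Omega)=L^{1,1}(\Omega)\subset L^{1,\infty}_a(\Omega)$ (Proposition~\ref{TH:L1qsubset}).

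The ``if'' direction, however, has a genuine gap at its central step. You aim to prove $\frac{u}{d}\in L^1(\Omega)$ by controlling the tail $\int_t^\infty\mu(s)\,ds$, where $\mu(s)=\lambda^N(\{|u|/d>s\})$, via the Cheeger-type inequality $\|v\|_{L^1}\le C\|\nabla v\|_{L^1}$ applied to the excess $v_t=(|u|-td)_+$. That inequality does follow from $I_\Omega(s)\ge Cs$ (coarea plus the isoperimetric bound, for supports of measure at most $\lambda^N(\Omega)/2$), but it controls the wrong quantity: by Fubini,
\begin{equation*}
\int_t^\infty\mu(s)\,ds=\int_\Omega\Bigl(\frac{|u|}{d}-t\Bigr)_+\,dx=\Bigl\|\frac{v_t}{d}\Bigr\|_{L^1(\Omega)},
\end{equation*}
whereas Cheeger bounds only the unweighted norm $\|v_t\|_{L^1(\Omega)}$. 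The missing factor $\frac1d$ is unbounded, and not even locally integrable up to the boundary, exactly where the difficulty sits. The inequality that would bridge the gap, $\|v/d\|_{L^1}\le C\|\nabla v\|_{L^1}$, is the Hardy inequality at exponent $1$ and is false already on $(0,1)$: the kernel computation only yields $\int_0^1|v(x)|x^{-1}\,dx\le\int_0^1|v'(t)|\log(1/t)\,dt$, and $v(x)=1/\log(1/x)$ shows the logarithm cannot be removed. A weighted isoperimetric inequality $P(E,\Omega)\ge C\int_E d^{-1}\,dx$ fails as well (take $E$ a small square touching $\partial\Omega$). Finally, the decay $s\mu(s)\to0$ coming from absolute continuity is not summable over dyadic scales, so it cannot close the tail estimate on its own; the gradient hypothesis must enter in an essential way, and the route you chose for injecting it does not produce the weighted integral. (A secondary point: applying the $L^p$ Hardy inequality to $v_t$ would require $v_t\in W^{1,p}_0(\Omega)$, which is not known a priori --- it is essentially the conclusion sought.)

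For comparison, the paper never passes through $\frac ud\in L^1(\Omega)$. It shows directly that $u_k=\min(u,kd)\to u$ in $W^{1,1}(\Omega)$: the gradient error is bounded by $\|\chi_{E_k}\nabla u\|_{L^1}+k\lambda^N(E_k)$ with $E_k=\{u>kd\}$, and $k\lambda^N(E_k)\to0$ is precisely the statement that $\|u/d\|_{L^{1,\infty}}$ is absolutely continuous (Proposition~\ref{TH:ACnorm}). This yields $u\in W^{1,1}_0(\Omega)$; extending by zero to a cube and using Poincar\'e gives $u\in W^{1,p}(\Omega)$, and one concludes via $W^{1,p}(\Omega)\cap W^{1,1}_0(\Omega)=W^{1,p}_0(\Omega)$, which the outer ball portion property guarantees (Theorem~\ref{TH:domains}); the isoperimetric condition serves only to get $u\in L^1(\Omega)$ from $|\nabla u|\in L^p(\Omega)$. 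Redirecting your truncation argument along these lines, rather than through an $L^1$ upgrade of $u/d$, is the way to repair the proof.
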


To be more exact, for the implication
$$
  \frac{u}{d}\in L^{1,\infty}_a(\Omega)\quad\text{and}\quad \left|\nabla u\right|\in L^p(\Omega)\quad \Rightarrow\quad u\in W^{1,p}_0(\Omega)
$$
we need that $\Omega$ is~bounded and satisfies~\eqref{E:iso} and
\begin{equation}\label{E:intersection}
  W^{1,p}(\Omega)\cap W^{1,1}_0(\Omega)=W^{1,p}_0(\Omega),
\end{equation}
while for the converse one we repeat some of the assumptions on the regularity of a domain suitable for the validity of the appropriate Hardy inequality, which can be found in earlier literature, see e.g. \cite{kadlec1966}, \cite{edmunds1987}, \cite{kinnunen1997}, \cite{edmunds2015}. Altogether, it turns out that the requirement that $\Omega$ obeys the outer ball portion property is a reasonable universal condition which, on one hand, implies~\eqref{E:intersection}, while, on the other, had been used  as an assumption in~\cite{edmunds2015}.

We point out that some condition on regularity of domain will be needed for both implications. That constitutes a crucial difference from previous results in \cite{edmunds1987} and \cite{kinnunen1997}, where one implication was proved with no restriction on the regularity of a domain. We will give an example of a domain for which this implication fails.

Let us now describe the structure of the paper. In Section~\ref{S:preliminaries}, we collect all the necessary background material. We fix definitions here, and also most of the notation.

Section~\ref{S:proof} contains two key theorems that will be later used in the proof of the main result. First we prove a general theorem, and then we apply it to obtaining a certain implication between properties of functions.

In Section~\ref{S:domains}, relations between various requirements on the regularity of a domain are treated. Most of the results contained there are demonstrated with particular examples. The results contained in Sections~\ref{S:proof} and~\ref{S:domains} are harvested in Corollary~\ref{TH:equivalence} which states an assertion in spirit of Theorem~\ref{T:main}, allowing more general assumptions. The proof of Theorem~\ref{T:main} is located at the end of Section~\ref{S:domains}.

In Section~\ref{S:dimension1}, we focus in detail on the specific situation in dimension $N=1$. In Section~\ref{S:examples} we give two examples illustrating that the assumptions of the main theorems cannot be omitted.

We finish the paper with an Appendix, in which we collect various auxiliary results, some of them of independent interest. In particular,  Subsection~\ref{S:ACnorm} is devoted to background results about spaces of functions with absolute continuous norm, namely their equivalent definitions and their relations to Lorentz spaces. These results are applied at several parts of the paper, in particular in the proof of the key Theorem~\ref{TH:Lpinfty}. In Subsection~\ref{SS:isoperimetric}, we collect some useful knowledge concerning the isoperimetric function of specific domains. The results of this subsection are used for example in Section~\ref{S:domains}, in particular in the proof of Theorem~\ref{T:main}.

\section{Preliminaries} \label{S:preliminaries}

In this section, we collect definitions of objects of our study, fix notation and give a survey of concepts and results from theory of function spaces that will be used in the subsequent parts of the paper. Our standard general references are \cite{BS}, \cite{kufner1977}, \cite{mazja1985} and \cite{PKJF}, where more details can be found.

Let $(\mathscr{R},\mu)$ be a~non-atomic $\sigma$-finite measure space. We denote by $\mathscr{M}(\mathscr{R},\mu)$ the set of all $\mu$-measurable functions on $\mathscr{R}$ whose values lie
in $[-\infty,\infty]$, by $\mathscr{M}_+(\mathscr{R},\mu)$ the set of all functions in $\mathscr{M}(\RR,\mu)$ whose values lie in $[0,\infty]$, and by $\mathscr{M}_{0}(\mathscr{R},\mu)$ the set of all functions
in $\mathscr{M}(\mathscr{R},\mu)$ that are finite $\mu$-a.e.~on $\mathscr{R}$.

We denote by $\lambda^N$, $N\in\N$, the $N$-dimensional Lebesgue measure, $d\lambda^N(x)=dx$ and we denote by $\chi_E$ the characteristic function of a set $E$. For a function $f\in\M(\RR,\mu)$ we define two functions $f_+,f_-\in \M(\RR,\mu)$ such that $f_+=f\chi_{\{x\in\RR,f(x)>0\}}$ and $f_-=-f\chi_{\{x\in\RR,f(x)<0\}}$. If $A,B$ are two non-negative quantities, we shall write $A\approx B$ if there exist positive constants $c_1$ and $c_2$ independent of adequate parameters involved in $A$ and $B$ such that $c_1A\leq B\leq c_2A$. The convention $0\cdot \infty=0$ applies.

For $E\subset\R^N$, we denote by $\overline{E}$ the closure of $E$ and by $E^{\circ}$ the interior of $E$. Moreover, we write $\partial E = \overline{E}\setminus E^{\circ}$. We also define the \textit{distance function from the boundary} of $E$ as $d(x)=\operatorname{dist}(x,\partial E)$.

For $f\in\mathscr{M}(\mathscr{R},\mu)$, the function $f^*\colon [0,\infty)\to[0,\infty]$, defined by
\begin{equation*}
    f^{\ast}(t)=\inf\{\xi\geq 0:\mu(\{x\in \RR:\abs{f(x)}>\xi\})\leq t\}\quad\text{for $t\in[0,\infty)$,}
\end{equation*}
is called the \emph{non-increasing rearrangement} of $f$.

\begin{defn}[continuous embedding]
Let $X, Y$ be two quasinormed linear spaces and let $X\subset Y$.
We say that the space X is \emph{continuously embedded} into the space Y, denoted $X\hookrightarrow Y$, if there exists a constant $C\in(0,\infty)$ such that
$$
\left\|f\right\|_{Y}\leq C \left\|f\right\|_{X}\quad \text{for every $f\in X$}.
$$
The smallest possible constant $C$ is called a \emph{norm of the embedding}.
\end{defn}

\begin{defn}[Lebesgue spaces]
Let $p\in[1,\infty]$. The collection $L^p(\mathscr{R})=L^p(\mathscr{R},\mu)$ of all functions $f\in\mathscr{M}(\mathscr{R},\mu)$ such that
$\left\|f\right\|_{L^{p}(\mathscr{R})}<\infty$, where
\begin{align*}
\left\|f\right\|_{L^{p}(\mathscr{R})}=
\left\{
\begin{array}{l@{\quad}l}
\left(\int_{\mathscr{R}}|f|^p d\mu\right)^{\frac1p},& p\in[1,\infty),\\
\operatorname{ess~sup}_{\mathscr{R}}|f|,& p=\infty,
\end{array}
\right.
\end{align*}
is called the \textit{Lebesgue space}.
\end{defn}

\noindent Recall that if $(\RR,\mu)$ is a finite measure space, then the Lebesgue spaces are nested in the sense that $L\sp{p_1}(\RR)\hookrightarrow L\sp {p_2}(\RR)$ whenever $1\leq p_2\leq p_1\leq \infty$ with a norm of the embedding equal to $\mu(\RR)^{1/p_2-1/p_1}$.

Lebesgue spaces are a pivotal example of the so-called rearrangement-invariant Banach function spaces.

\begin{defn}
We say that a functional $\varrho\colon \mathscr{M}_{+}(\RR,\mu)\rightarrow [0,\infty]$ is a \emph{Banach function norm} if, for all $f$, $g$ and $\{f_n\}^{\infty}_{n=1}$ in $\mathscr{M}_{+}(\RR,\mu)$, for every $\lambda\in[0,\infty)$ and for every $\mu$-measurable subset $E$ of $\RR$, the following five properties are satisfied:

(P1) $\varrho(f)=0 \Leftrightarrow f=0$ $\mu$-a.e.~on $\mathscr{R}$; $\varrho(\lambda f)=\lambda\varrho(f)$; $\varrho(f+g)\leq
\varrho(f)+\varrho(g)$;

(P2) $g\leq f$ $\mu$-a.e.~on $\mathscr{R}$ $\Rightarrow\varrho(g)\leq\varrho(f)$;

(P3) $f_n\nearrow f$ $\mu$-a.e. on $\RR$ $\Rightarrow\varrho(f_n)\nearrow\varrho(f)$;

(P4) $\mu(E)<\infty \Rightarrow \varrho(\chi_{E})<\infty$;

(P5) $\mu(E)<\infty \Rightarrow \int_{E}f \,d\mu\leq C_E \varrho(f)$ for some constant $C_E \in (0,\infty)$ possibly depending on $E$ and $\varrho$ but independent of $f$.

\medskip

We say that $\varrho\colon \mathscr{M}_{+}(\RR,\mu)\rightarrow [0,\infty]$ is a \textit{Banach function quasinorm} if it satisfies (P2), (P3), and (P4), and (P1) replaced by its weakened modification (Q1), where

(Q1) $\varrho(f)=0\Leftrightarrow f=0$ $\mu$-a.e.~on $\mathscr{R}$, $\varrho(\lambda f)=\lambda\varrho(f)$ and there exists $C\in(0,\infty)$ such that
\begin{equation*}
    \varrho(f+g)\le C(\varrho(f)+\varrho(g))\quad\text{for every $f,g\in\mathscr{M}_{+}(\RR,\mu)$.}
\end{equation*}

For a Banach function~quasinorm $\varrho$ and $X=\{f\in \mathscr{M}_{0}(\RR,\mu), \varrho(|f|)<\infty\}$ we denote $\left\|f\right\|_{X}=\varrho(|f|)$ for $f\in\M(\RR,\mu)$ and we then say that $X$ is a \emph{quasi-Banach function space} over $(\RR,\mu)$. In the case $\varrho$
is a Banach function~norm, we call $X$ a \emph{Banach function space} over $(\RR,\mu)$.
\end{defn}

If $p\in[1,\infty]$, we define the \textit{H\"older conjugate exponent}, $p'$, of $p$, by
\begin{equation*}
    p'=
        \begin{cases}
            \infty &\text{if $p=1$,}
                \\
            \frac{p}{p-1} &\text{if $p\in(1,\infty)$,}
                \\
            1  &\text{if $p=\infty$.}
        \end{cases}
\end{equation*}

If $p\in[1,\infty]$, then the \textit{H\"older inequality} states that
\begin{equation}\label{E:holder}
\left\|fg\right\|_{L^1(\mathscr{R})}
\leq
\left\|f\right\|_{L^p(\mathscr{R})}\left\|g\right\|_{L^{p'}(\mathscr{R})}
\end{equation}
holds for every $f,g\in\M_+(\RR,\mu)$ such that $f\in L^p(\mathscr{R})$ and $g\in L^{p'}(\mathscr{R})$. Then also $fg\in L^1(\mathscr{R})$.


Now we will focus on a certain more general concept of function spaces than Lebesgue ones, namely on the so-called Lorentz spaces, which will have a~crucial importance in the formulation of our results.

\begin{defn}[Lorentz spaces]
Let $p,q\in[1,\infty]$. The collection $L^{p,q}(\mathscr{R})=L^{p,q}(\mathscr{R},\mu)$ of all functions $f\in \mathscr{M}_0(\mathscr{R},\mu)$ such that $\left\|f\right\|_{L^{p,q}(\mathscr{R})}<\infty$, where
$$
\left\|f\right\|_{L^{p,q}(\mathscr{R})}
=
\left\{
\begin{array}{l@{\quad}l}
\left(\int_0^{\infty}[t^{\frac{1}{p}}f^{*}(t)]^q\frac{dt}{t}\right)^{\frac{1}{q}},& q\in[1,\infty),\\
\sup_{0<t<\infty}t^{\frac1p}f^{*}(t), & q=\infty,
\end{array}
\right.
$$
is called a \emph{Lorentz space}.
\end{defn}

\noindent It will be useful to recall that $L\sp{p,p}(\RR)=L\sp p(\RR)$ for every $p\in[1,\infty]$, that $L\sp{p,q}(\RR)\hookrightarrow L\sp {p,r}(\RR)$ whenever $p\in[1,\infty]$ and $1\leq q\leq r\leq \infty$ with the norm of the embedding equal to $(p/q)^{1/q-1/r}$, and that, if moreover $\mu(\RR)<\infty$, then also $L\sp{p_1,q}(\RR)\hookrightarrow L\sp {p_2,r}(\RR)$ whenever $1\leq p_2<p_1 \leq \infty$  and $q,r\in[1,\infty]$. If either $p\in[1,\infty)$ and $q\in[1,\infty]$ or $p=q=\infty$, then $L^{p,q}(\RR)$ is a quasi-Banach function~space. If one of the  conditions
\begin{equation*}
\begin{cases}
p\in(1,\infty),\ q\in[1,\infty],\\
p=q=1,\\
p=q=\infty,
\end{cases}
\end{equation*}
holds, then $L\sp{p,q}(\RR)$ is equivalent to a Banach function~space.

\begin{rem}
A quasi-Banach function space may, or may not, satisfy (P5). A typical example of a quasi-Banach function~space which does not satisfy (P5) is $L^{1,q}(\RR)$ with $q\in(1,\infty]$. Moreover spaces $L^{1,q}(\RR)$ with $q\in(1,\infty]$ do not satisfy (P1) and there does not exist any equivalent norm satisfying (P1).
\end{rem}

\begin{rem}(Lorentz norm via distribution)\label{TH:Lorentz_eq}
The functional $\left\|\cdot\right\|_{L^{p,q}(\mathscr{R})}$ can be equivalently rewritten as
$$
\left\|f\right\|_{L^{p,q}(\mathscr{R})}=p^{\frac1q}\left\|\xi^{1-\frac1q}\mu(\left\{x\in\mathscr{R}:\left|f(x)\right|>\xi\right\})^{\frac{1}{p}}\right\|_{L^q(0,\infty)}.
$$
\end{rem}

\begin{defn}
Let $p\in[1,\infty)$. The set $L^{p,\infty}_a(\mathscr{R})=L^{p,\infty}_a(\mathscr{R},\mu)$ is defined as the collection of all functions $f\in L^{p,\infty}(\mathscr{R})$ having \textit{absolutely continuous norm}, i.e.
$$
\left\|f\chi_{E_k}\right\|_{L^{p,\infty}(\mathscr{R})}\rightarrow0\quad\text{for every sequence $\left\{E_k\right\}_{k=1}^{\infty}$ satisfying $E_k\rightarrow\emptyset$,}
$$
where $E_k\rightarrow\emptyset$ denotes the fact that $\chi_{E_k}\rightarrow 0$ $\mu$-a.e. on $\RR$.
\end{defn}

We will present more details about spaces of functions with absolute continuous norm in the Section~\ref{S:ACnorm}.

Let us recall the definition of the maximal operator.
\begin{defn}
Suppose that $R\in(0,\infty)$ and $\Omega\subset \R^N$. The \textit{maximal operator} $M_R$ is defined by
$$
M_Ru(x)=\sup_{r\in(0,R)}\frac1{\lambda^N(B(x,r))}\int_{B(x,r)\cap \Omega}\left|u(y)\right|\,dy
$$
for a functions $u$ integrable on each measurable bounded subset of $\Omega$. When $R=\infty$, the corresponding operator is denoted $M$ and it is called the \textit{Hardy-Littlewood maximal operator}.
\end{defn}
Note that for function $u$ integrable on each measurable bounded subset of $\Omega$ and $x \in \Omega$ we have that $M_Ru(x)\leq Mu(x)$ and the operator $M$ is bounded from $L^p(\Omega)$ to $L^p(\Omega)$, $p\in (1, \infty]$.

Another class of spaces of crucial importance in this research is that of the Sobolev spaces. Let us focus on its definition and properties.

\begin{defn}[Sobolev spaces]
Let $\Omega\subset\R^N$ be an open set, $m$ be a nonnegative integer and $1\leq p\leq\infty$. Set
$$
W^{m,p}(\Omega)=\{u\in L^p(\Omega)\colon D^{\alpha}u \in L^p(\Omega) \text{ for }0\leq|\alpha|\leq m\},
$$
where we denote by $\alpha=(\alpha_1,\dots,\alpha_N)$ a multiindex and by $D^{\alpha}u$ a weak derivative of~$u$ with respect to $\alpha$.
The set $W^{m,p}(\Omega)$ is called the \textit{Sobolev space}.
We define the functional $\left\|\cdot\right\|_{W^{m,p}(\Omega)}$ as follows:
\begin{align*}
&\left\|u\right\|_{W^{m,p}(\Omega)}=
\left\{
\begin{array}{l@{\quad}l}
\left(\sum_{0\leq|\alpha|\leq m}\left\|D^{\alpha}u\right\|^p_{L^p(\Omega)}\right)^{\frac1p},& 1\leq p<\infty,\\
\max_{0\leq|\alpha|\leq m}\left\|D^{\alpha}u\right\|_{L^{\infty}(\Omega)},& p=\infty,
\end{array}
\right.
\end{align*}
for every function $u$ for which the right-hand side is defined.

We define the set $W^{m,p}_0(\Omega)$ as the closure of $C^{\infty}_0(\Omega)$ in the space $W^{m,p}(\Omega)$, where $C^{\infty}_0(\Omega)$ denotes a set of all functions defined on $\Omega$ with continuous derivatives of each order and whose support is a compact subset of $\Omega$.
\end{defn}

The sets $W^{m,p}(\Omega)$ and $W^{m,p}_0(\Omega)$ equipped with the functional $\left\|\cdot\right\|_{W^{m,p}(\Omega)}$ are Banach spaces.
\begin{defn}[domain and several types of regularity]\label{TH:doamins}
~
\begin{itemize}
\item We say that $\Omega\subset \R^N$ is a \textit{domain} if it is open and connected.

\item A bounded domain $\Omega$ is called a \textit{Lipschitz domain}, if for each point $x\in\partial\Omega$ there exists a neighbourhood $U\subset \R^N$ such that the set $U\cap\Omega$ can be presented by the inequality $x_n> f(x_1,\dots,x_{n-1})$ in some Cartesian coordinate system with function $f$ satisfying a Lipschitz condition.

\item We say that a domain $\Omega$ possesses the \textit{outer cone property} if there exist positive numbers $a,b$ such that each point of $\partial\Omega$ is the vertex of a cone contained in $\R^N\setminus\Omega$, where the cone in question is presented by inequalities $x^2_1+\dots+x^2_{n-1}<bx^2_n$, $0<x_n<a$, in some Cartesian coordinate system.
\item A domain $\Omega$ possesses the \textit{inner cone property} if there exist positive numbers $a,b$ such that each point of $\overline{\Omega}$ is the vertex of a cone contained in $\overline{\Omega}$, where the corresponding cone is presented by inequalities $x^2_1+\dots+x^2_{n-1}<bx^2_n$, $0<x_n<a$, in some Cartesian coordinate system.
\item By a \textit{John domain} we call any bounded domain $\Omega$ for which there exist $x_0\in \Omega$ and constant $A>0$ such that each $x\in\Omega$ can be connected to $x_0$ by a rectifiable
curve $\gamma:[0,1]\mapsto\Omega$, $\gamma(0)=x$, $\gamma(1)=x_0$, such that
\begin{equation*}
\ell(\gamma([0,t]))\leq Ad(\gamma(t))
\end{equation*}
for all $t\in[0,1]$, where $\ell(\gamma([0,t]))$ is length of $\gamma([0,t])$.
\item We say that a bounded domain $\Omega$ has the \textit{outer ball portion property}, if there exist positive constants $b$ and $r_0$ such that, for all $x\in\partial\Omega$ and all $r \in \left(0, r_0\right]$, one has
\begin{equation}\label{EQ:mira_kouli}
\frac{\lambda^N\left(B(x,r)\cap (\R^N\setminus\Omega)\right)}{\lambda^N\left(B(x,r)\right)}\geq b.
\end{equation}
(Domains with ball portion property can be also found in \cite{hajlasz1999}, \cite{edmunds2015}, \cite{edmunds2017}.)
\end{itemize}
\end{defn}

Note that if $\Omega$ is a Lipschiz domain, then it possesses the outer cone property and even the inner cone property. Moreover, any bounded domain having inner cone property is also a John domain, and if a bounded domain $\Omega$ possesses the outer cone property, then it has also the outer ball portion property. Converse assertions are not true in general. (See also \cite[Section 1.1.9]{mazja1985} and \cite[Section 3]{edmunds2015}.)

Let $\Omega$ be bounded and enjoying the inner cone property. Then the generalized Poincar\' e inequality (cf.~e.g.~\cite[Section 1.1.11]{mazja1985}) tells us that for each $m$-times weakly differentiable function $u$ such that $\left|\nabla^m u\right| \in L^p(\Omega)$, $p\in[1,\infty)$, one has
$$
\sum_{k=0}^{m-1}\left\||\nabla^k u|\right\|_{L^p(\Omega)}<\infty.
$$
Note that for a general domain this is not true.

Let us now introduce a one more concept of regularity of a domain based on the so-called isoperimetric function, sometimes called also an isoperimetric profile. It will guarantee an important relation similar to the Poincar\' e inequality.

\begin{defn}\label{D:essential-boundary}
Let $E\subset\mathbb R^N$. Then the set $\partial^ME$, defined as the collection of all $x\in\mathbb R^N$ such that
\begin{equation*}
    \lim_{r\to0_+}\frac{1}{\lambda^N(B(x,r))}\int_{B(x,r)}\chi_E(y)\,dy \quad \text{is nether $0$ nor $1$,}
\end{equation*}
is called the \emph{essential part of the boundary of $E$}, see \cite[Definition 5.8.4]{ziemer1989}.
\end{defn}

\begin{defn}[isoperimetric function]
For domain $\Omega\subset\R^N$ we define the \textit{perimeter} of a measurable set $E\subset\Omega$
\begin{equation*}
P(E,\Omega)=
\int_{\Omega\cap \partial^M E}
d\mathcal{H}^{N-1}(x),
\end{equation*}
where $\mathcal{H}^{N-1}$ is the $(N-1)$-dimensional Hausdorff measure.
The \textit{isoperimetric function} (also called \textit{isoperimetric profile}) $I_{\Omega}: [0, \lambda^N(\Omega)] \mapsto [0, \infty]$ of $\Omega$ is then given by
\begin{equation*}
I_{\Omega}(s)=\inf\left\{P(E,\Omega),E\subset\Omega, s\leq\lambda^N(E)\leq\frac{\lambda^N(\Omega)}2\right\} \quad\text{for } s\in\left[0,\frac{\lambda^N(\Omega)}2\right],
\end{equation*}
and $I_{\Omega}(s)=I_{\Omega}(\lambda^N(\Omega)-s)$ for $s\in\left[\frac{\lambda^N(\Omega)}2,\lambda^N(\Omega)\right]$.
\end{defn}

The isoperimetric function of several domains is known. For instance, for any John domain, we have
$$
I_{\Omega}(s)\approx s^{\frac1{N'}}
$$
near $0$.
In Section 4 of \cite{CPS} is given survey of results about this concept. For each domain $\Omega$ it holds that $I_{\Omega}(s)<\infty$ for $s\in\left[0,\frac{\lambda^N(\Omega)}2\right]$, moreover there exists a constant $C = C(\Omega)$ such that
\begin{equation*}
	I_{\Omega}(s)\leq Cs^{\frac1{N'}}
\end{equation*}
for $s$ near $0$. This means that the best possible behavior of an isoperimetric function at
$0$ is that of John domains.
In \cite{CPS} it is moreover proved that for $p\in[1,\infty]$ and for any domain $\Omega$ for which there exists $C>0$ such that
\begin{equation}\label{EQ:isoperimetric_s}
	I_{\Omega}(s)\geq Cs\quad\text{for }s\in\left[0,\frac{\lambda^N(\Omega)}2\right]
\end{equation}
and for every $m$-times weakly differentiable function $u$ such that $\left|\nabla^m u\right| \in L^p(\Omega)$, one has
$$
\sum_{k=0}^{m-1}\left\||\nabla^k u|\right\|_{L^1(\Omega)}<\infty.
$$

To finish this section, we will recall that in the case when $\Omega$ is Lipschitz, an elegant characterization of the space $W_0^{1,p}(\Omega)$ in terms of the trace operator is available.

Let $\Omega\subset\R^N$ be a Lipschitz domain and let $T$ be the classically defined trace operator (see e.g. \cite[Section 6.4]{kufner1977}). Then
$$
W_0^{1,p}(\Omega)=\{u\in W^{1,p}(\Omega),Tu=0\,\,\text{a.e.~in }\partial\Omega\}.
$$

\section{Key theorems}\label{S:proof}
The following theorem will be used in case $p=1$ to prove our main result. We will however present a more general assertion which covers all $p\in[1,\infty)$. Note that, for $p\in (1,\infty)$, this theorem can be obtained as a consequence of \cite[Theorem 3.13]{kinnunen1997}. We present a new and different proof here, based on a~new approach to bounded domains, followed by a classical extension argument that enables one to include unbounded domains. Both proofs, namely the one presented below and that in~\cite{kinnunen1997}, are given for a~general open set in $\R^N$.

\begin{thm}\label{TH:Lpinfty}
Let $p\in [1,\infty)$ and let $\Omega\subset\R^N$ be open set. Let $u$ be a function such that
$$
u\in W^{1,p}(\Omega)\qquad\text{and}\qquad\frac{u}{d}\in L^{p,\infty}_a(\Omega).
$$
Then
$$
u\in W^{1,p}_0(\Omega).
$$
\end{thm}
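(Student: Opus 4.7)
The plan is to combine a distance-weighted Lipschitz truncation of $u$ with the classical $L^p$-theorem quoted in the introduction, and to let the absolute continuity of the $L^{p,\infty}$-norm do the work of upgrading a weak bound near the boundary into an $L^p$-convergence.

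Concretely, for each $n \in \N$ I set
$$u_n(x) = \operatorname{sgn}(u(x))\, \min\{|u(x)|,\, n\,d(x)\} = \bigl(u \wedge nd\bigr) \vee (-nd), \qquad x \in \Omega.$$
A routine calculation, using that $d$ is $1$-Lipschitz, shows that $u_n \in W^{1,p}(\Omega)$ with
$$\nabla u_n = \chi_{\{|u| < nd\}}\,\nabla u + n\,\operatorname{sgn}(u)\,\chi_{\{|u| > nd\}}\,\nabla d.$$
Since $|u_n/d| \le n$, whenever $\Omega$ is bounded one has $u_n/d \in L^p(\Omega)$, and the classical theorem of Harris (or of Kenig for $p=1$) recalled in the introduction gives $u_n \in W^{1,p}_0(\Omega)$. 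For unbounded $\Omega$, I would further multiply $u_n$ by a radial cutoff $\phi_R \in C_0^\infty(\R^N)$ with $\phi_R \equiv 1$ on $B(0,R)$, apply the bounded case to $u_n\phi_R$, and let $R \to \infty$ to recover $u_n \in W^{1,p}_0(\Omega)$ by closedness.

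The decisive step is to prove $u_n \to u$ in $W^{1,p}(\Omega)$. Convergence in $L^p$ is immediate since $|u - u_n| = (|u| - nd)_+ \le |u| \in L^p(\Omega)$ and $(|u| - nd)_+ \to 0$ pointwise on $\Omega$ (because $d > 0$ there), so dominated convergence applies. For the gradients, the formula for $\nabla u_n$ yields
$$|\nabla u - \nabla u_n| \le |\nabla u|\,\chi_{\{|u|/d \ge n\}} + n\,\chi_{\{|u|/d > n\}}.$$
The first summand tends to $0$ in $L^p(\Omega)$ by dominated convergence, as $u/d$ is finite almost everywhere. The second summand is the obstacle, and this is where $u/d \in L^{p,\infty}_a(\Omega)$ is essential: setting $F_n = \{|u|/d > n\}$, the sets $F_n$ decrease to a null set, so $F_n \to \emptyset$, and therefore
$$\left\|(u/d)\,\chi_{F_n}\right\|_{L^{p,\infty}(\Omega)} \longrightarrow 0.$$
On the other hand, since $\{|(u/d)\,\chi_{F_n}| > n\} = F_n$, evaluating the weak-$L^p$ quasinorm at the level $t = n$ gives
$$n\,\lambda^N(F_n)^{1/p} \le \left\|(u/d)\,\chi_{F_n}\right\|_{L^{p,\infty}(\Omega)},$$
hence $\|n\,\chi_{F_n}\|_{L^p(\Omega)} = n\,\lambda^N(F_n)^{1/p} \to 0$.

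Combining these estimates delivers $\nabla u_n \to \nabla u$ in $L^p(\Omega)$, so $u_n \to u$ in $W^{1,p}(\Omega)$; since $W^{1,p}_0(\Omega)$ is closed in $W^{1,p}(\Omega)$, the conclusion $u \in W^{1,p}_0(\Omega)$ follows. The hard part is precisely the boundary-layer gradient term $n\,\chi_{F_n}$: the bare weak-$L^p$ condition on $u/d$ only supplies boundedness of $n\,\lambda^N(F_n)^{1/p}$, and it is exactly the absolute continuity of the $L^{p,\infty}$-norm that upgrades that boundedness to convergence and closes the argument.
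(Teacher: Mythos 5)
Your proposal is correct in substance, and its second half coincides with Steps~3 and~4 of the paper's argument: the truncation $u_n=\operatorname{sgn}(u)\min\{|u|,nd\}$, the exceptional sets $F_n=\{|u|/d>n\}$, and the observation that absolute continuity of the $L^{p,\infty}$ quasinorm is exactly what upgrades the bound $n\,\lambda^N(F_n)^{1/p}\le\|(u/d)\chi_{F_n}\|_{L^{p,\infty}(\Omega)}$ to convergence to zero are all there (the paper routes this last point through its Proposition~\ref{TH:ACnorm}(c), but your direct derivation is equivalent). The genuine divergence is in how the membership $u_n\in W^{1,p}_0(\Omega)$ is certified. You import the classical Harris/Kenig theorem ($u\in W^{1,p}(\Omega)$ and $u/d\in L^p(\Omega)$ imply $u\in W^{1,p}_0(\Omega)$), whereas the paper proves this membership from scratch in two elementary steps: first $d\in W^{1,p}_0(\Omega)$ via the truncations $(d-\eta)_+$, and then a domination lemma (if $h\in W^{1,p}(\Omega)$ and $|h|\le g$ for some $g\in W^{1,p}_0(\Omega)$, then $h\in W^{1,p}_0(\Omega)$), applied with $g=nd$. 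Your route is shorter, but it buys this at the cost of the maximal-operator machinery behind the classical theorem and, for $p=1$, of Kenig's Whitney-decomposition argument, which the introduction only mentions in passing without a precise citable source; the paper's route is self-contained and is explicitly advertised as a new proof independent of Kinnunen--Martio. Two details you should tighten if you keep your version: the classical theorem is stated for \emph{domains}, while the present theorem concerns arbitrary open sets, so you must either reduce to connected components (using that $\operatorname{dist}(x,\partial\Omega_j)\ge d(x)$ for $x$ in a component $\Omega_j$, since $\partial\Omega_j\subset\partial\Omega$) or check that connectedness is never used; and in the unbounded case you need to verify, as the paper does explicitly, that the distance to the boundary of the localized set $\Omega\cap B$ agrees with $d$ on the support of the cutoff, so that the hypothesis $u_n\phi_R/\operatorname{dist}(\cdot,\partial(\Omega\cap B))\in L^p$ really follows from $|u_n|\le nd$.
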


\begin{proof}
We will assume that neither $\Omega$ nor $\R^N\setminus \Omega$  is empty, as otherwise the assertion is either trivial or well-known, respectively. First, assume that $\Omega$ is bounded.

\textit{Step 1: The distance function $d$ is an element of $W^{1,p}_0(\Omega)$.}
For each $\eta>0$, let us define the function $d_{\eta}$ by $d_{\eta}(x)=(d(x)-\eta)_+$ for $x\in\Omega$. Then $d_{\eta}$ satisfies the Lipschitz condition with constant $1$ and has compact support in $\Omega$. So, $d_\eta\in W^{1,p}_0(\Omega)$.
We have
\begin{equation}\label{EQ:dist1}
\left\|d-d_{\eta}\right\|_{L^p(\Omega)}
\leq
\left\|\eta\right\|_{L^p(\Omega)}
=
\eta\lambda^N(\Omega)^{\frac1p}
\xrightarrow{\eta\rightarrow0+}0,
\end{equation}
and, on employing the dominated convergence theorem, in which the role of the integrable majorant is played by the constant function identically equal to $1$ on $\Omega$, we arrive at
\begin{equation}\label{EQ:dist2}
\left\||\nabla(d-d_{\eta})|\right\|_{L^p(\Omega)}
=
\left\| \chi_{\{x\in\Omega,d(x)\leq\eta\}}|\nabla d|\right\|_{L^p(\Omega)}
\xrightarrow{\eta\rightarrow0+}0.
\end{equation}
Thus, combining \eqref{EQ:dist1} and \eqref{EQ:dist2} we get
$$\lim_{\eta\rightarrow0}\left\|d-d_{\eta}\right\|_{W^{1,p}(\Omega)}=0.$$
Since $W^{1,p}_0(\Omega)$ is a closed subspace of $W^{1,p}(\Omega)$, we obtain $d\in W^{1,p}_0(\Omega)$.

\textit{Step 2: Let $h\in W^{1,p}(\Omega)$. Suppose that there exists $g\in W^{1,p}_0(\Omega)$ such that $0\leq|h(x)|\leq g(x)$ for a.e.~$x\in\Omega$. Then $h\in W^{1,p}_0(\Omega)$.}

To prove this, let us take $\{g_n\}_{n=1}^{\infty}\in C^{\infty}_0(\Omega)$ such that $\left\|g-g_n\right\|_{W^{1,p}(\Omega)}\rightarrow 0$ as $n\rightarrow \infty$. Without loss of generality, we can assume that all the functions $h$ and $g_n$, $n\in\N$, are nonnegative (otherwise we prove the assertion for $h_+$ and $h_-$). Let us define
$$
h_n(x)=\min\{h(x),g_n(x)\}
$$
for each $n\in\N$.
Since each $h_n$ is the minimum of two functions from $W^{1,p}(\Omega)$, we have $h_n\in W^{1,p}(\Omega)$. This follows in a standard manner from~\cite[Corollary~2.1.8]{ziemer1989}.
Moreover, $h_n\leq g_n\in C^{\infty}_0(\Omega)$, so we easily get $h_n\in W^{1,p}_0(\Omega)$. In particular, $h-h_n$ is weakly differentiable.

For $n\in\N$, denote
\begin{align*}
    &E_n=\{x\in\Omega:g(x)>h(x)>g_n(x)\},
        \\
    &F_n=\{x\in\Omega:g(x)=h(x)>g_n(x)\},
\end{align*}
and observe that
\begin{equation}\label{E:alpha}
    \lim_{n\to\infty}\lambda^N(E_n)=0,
\end{equation}
as the converse would contradict $\left\|g-g_n\right\|_{L^{p}(\Omega)}\rightarrow 0$. By the construction of $\{h_n\}_{n=1}^{\infty}$, one has, for every $n\in\N$,
\begin{align}\label{E:beta}
        \|h-h_n\|_{L^p(\Omega)} &= \|h-h_n\|_{L^p(E_n\cup F_n)}
        \le \|h-h_n\|_{L^p(E_n)} + \|h-h_n\|_{L^p(F_n)}
            \nonumber\\
        &\le \|h\|_{L^p(E_n)} + \|g-g_n\|_{L^p(F_n)}
        \le \|h\|_{L^p(E_n)} + \|g-g_n\|_{L^p(\Omega)},
\end{align}
owing to the estimate $0\le h_n\le h$ and the definition of $F_n$. Clearly, since $h\in L^p(\Omega)$, one gets
\begin{equation}\label{E:gamma}
    \lim_{n\to\infty}\|h\|_{L^p(E_n)}=0,
\end{equation}
on employing~\eqref{E:alpha} and using the absolute continuity of the Lebesgue integral. Furthermore, as $g_n\to g$ in $W^{1,p}(\Omega)$, we also have
\begin{equation}\label{E:delta}
    \lim_{n\to\infty}\|g-g_n\|_{L^p(\Omega)}=0.
\end{equation}
Altogether,~\eqref{E:beta}, \eqref{E:gamma} and~\eqref{E:delta} combined yield
\begin{equation}\label{E:epsilon}
    \lim_{n\to\infty}\|h-h_n\|_{L^p(\Omega)}=0.
\end{equation}

Next, one obtains
\begin{align}\label{E:eta}
        \nonumber\||\nabla(h-h_n)|\|_{L^p(\Omega)} &\le \||\nabla(h-h_n)|\|_{L^p(E_n)}+\||\nabla(h-h_n)|\|_{L^p(F_n)}
            \\
        \nonumber&\le \||\nabla(h-g)|\|_{L^p(E_n)}+\||\nabla(g-h_n)|\|_{L^p(E_n)}+\||\nabla(h-h_n)|\|_{L^p(F_n)}
            \\
       \nonumber &\le \||\nabla(h-g)|\|_{L^p(E_n)}+2\||\nabla(g-g_n)|\|_{L^p(E_n\cup F_n)}
            \\
       \nonumber &\le \||\nabla(h-g)|\|_{L^p(E_n)}+2\||\nabla(g-g_n)|\|_{L^p(\Omega)}
                   \\
        &\le \||\nabla(h-g)|\|_{L^p(E_n)}+2\|g-g_n\|_{W^{1,p}(\Omega)}.
\end{align}
Since $|\nabla (h-g)|\in L^p(\Omega)$, one gets, by~\eqref{E:alpha} and using the absolute continuity of the Lebesgue integral, similarly as in~\eqref{E:gamma},
\begin{equation}\label{E:gamma-b}
    \lim_{n\to\infty}\||\nabla(h-g)|\|_{L^p(E_n)}=0.
\end{equation}
Thus, by~\eqref{E:delta},~\eqref{E:eta} and~\eqref{E:gamma-b}, we arrive at
\begin{equation}\label{E:omega}
    \lim_{n\to\infty}\||\nabla(h-h_n)|\|_{L^p(\Omega)}=0.
\end{equation}
Coupling~\eqref{E:epsilon} with~\eqref{E:omega}, we get
\begin{align*}
\lim_{n\rightarrow\infty}\left\|h-h_n\right\|_{W^{1,p}(\Omega)}
=0.
\end{align*}
Once again, due to the fact that $W^{1,p}_0(\Omega)$ is a closed subspace of $W^{1,p}(\Omega)$, we obtain $h\in W^{1,p}_0(\Omega)$.


\textit{Step 3: Construction of an  approximating sequence.}
Let $u$ satisfy the assumptions of the theorem. Without any loss of generality, assume that $u\geq0$. Since $u\in L^{p}(\Omega)$, we have that $u$ is finite for almost every $x\in\Omega$. For each $k\in\N$, let us define
$$
u_k=\min\{u,kd\}\qquad\text{and}\qquad E_k=\{x\in \Omega, u(x)>kd(x)\}.
$$
Then,
\begin{equation}\label{EQ:Ek_zero}
\lim_{k\rightarrow\infty}\lambda^N(E_k)=0
\end{equation}
 and $u_k\leq kd$. We have that
$$
\left\|u_k\right\|_{L^p(\Omega)}\leq \left\|u\right\|_{L^p(\Omega)}
$$
and
$$
\left\||\nabla u_k|\right\|_{L^p(\Omega)}\leq \left\||\nabla u|\right\|_{L^p(\Omega)} + k\lambda^N(\Omega)^{\frac{1}{p}},
$$
and so $u_k\in W^{1,p}(\Omega).$
Thus, applying Step 1 to the function $kd$ and employing Step 2, we get $u_k\in W^{1,p}_0(\Omega)$

\textit{Step 4: The final approach.} Finally, we will show that $\lim_{k\rightarrow\infty}\left\|u-u_k\right\|_{W^{1,p}(\Omega)}=0$, and thereby complete the proof.
We have
\begin{equation}\label{EQ:characteristic}
u-u_k=(u-kd)\chi_{E_k}\leq u \chi_{E_k}.
\end{equation}
Thus, using
\eqref{EQ:Ek_zero} and employing the dominated convergence theorem once again, this time with the majorant $u\in L^p(\Omega)$, we get easily that
\begin{equation}\label{EQ:zero_order}
\lim_{k\rightarrow\infty}\left\|u-u_k\right\|_{L^{p}(\Omega)}=0.
\end{equation}
In order to deal with the gradient of $u-u_k$, we establish the estimate
\begin{equation}\label{EQ:first_order}
\left\||\nabla(u-u_k)|\right\|_{L^{p}(\Omega)}
\leq
\left\|\chi_{E_k}|\nabla u|\right\|_{L^{p}(\Omega)}+\left\|\chi_{E_k}|\nabla (kd)|\right\|_{L^{p}(\Omega)},
\end{equation}
where we used \eqref{EQ:characteristic} and the triangle inequality.
Analogously to the argument which we applied above to obtain \eqref{EQ:zero_order}, using \eqref{EQ:Ek_zero} and the dominated convergence theorem with the majorant $|\nabla u|\in L^p(\Omega)$, we get
\begin{equation}\label{EQ:first_summand}
\lim_{k\rightarrow\infty}\left\|\chi_{E_k}|\nabla u|\right\|_{L^{p}(\Omega)}=0.
\end{equation}
As for the second summand on the right-hand side of~\eqref{EQ:first_order}, we have that $
\left|\nabla (kd)\right|\leq k
$ thanks to the fact that $d$ is a Lipschitz function with constant $1$,
and so
\begin{align}\label{EQ:second_summand}
\lim_{k\rightarrow\infty}\left\|\chi_{E_k}|\nabla (kd)|\right\|_{L^{p}(\Omega)}
&\leq
\lim_{k\rightarrow\infty}k \left\|\chi_{E_k}\right\|_{L^{p}(\Omega)}
=
\lim_{k\rightarrow\infty}k \lambda^N(E_k)^{\frac1p}\nonumber\\
&=
\lim_{k\rightarrow\infty}k \lambda^N\left(\left\{x\in \Omega, \frac{u(x)}{d(x)}>k\right\}\right)^{\frac1p}
=
0
\end{align}
owing to the fact that $\frac{u}{d}\in L^{p,\infty}_a(\Omega)$ and using Proposition~\ref{TH:ACnorm}.
Combining \eqref{EQ:zero_order}, \eqref{EQ:first_order}, \eqref{EQ:first_summand} and \eqref{EQ:second_summand}, we obtain
$$
\lim_{k\rightarrow\infty}\left\|u-u_k\right\|_{W^{1,p}(\Omega)}=0.
$$
Since $W^{1,p}_0(\Omega)$ is closed in $W^{1,p}(\Omega)$, we obtain from Step 3 that $u\in W^{1,p}_0(\Omega)$.

To prove the assertion for unbounded open set $\Omega$, we recall the method used at the end of the proof of \cite[Theorem 3.13]{kinnunen1997}. Let $x_0\in \partial\Omega$. Let us take a sequence of functions $\{\phi_i\}_{i=1}^{\infty}\subset C_0^{\infty}(\R^N)$ such that for each $x\in\R^N$ one has $\phi_i(x)\in[0,1]$,
$\phi_i(x)=1$ for $x\in B(x_0,2^i)$, $\phi_i(x)=0$ for $x\in \R^N\setminus B(x_0,2^{i+1})$, and for each $x\in\R^N$ it holds that $\left|\nabla\phi_i(x)\right|\leq C$, where $C$ is independent of $i$. Let $v_i=\phi_i u$ and denote $\Omega_i=\Omega \cap B(x_0,2^{i+2})$. Then $v_i\in L^{p}(\Omega_i)$,
$$
\left|\nabla v_i\right|\leq \left|\phi_i\right|\left|\nabla u\right|+\left|\nabla\phi_i\right|\left| u\right| \in L^{p}(\Omega_i).
$$
Furthermore, $\frac{v_i(x)}{\operatorname{dist}(x,\partial\Omega_i)}\in L^{p,\infty}_a(\Omega_i)$, since
\begin{equation*}
    \frac{v_i(x)}{\operatorname{dist}(x,\partial\Omega_i)} = 0 \quad \text{for $x\in \Omega_i\setminus B(x_0,2^{i+1})$,}
\end{equation*}
while for $x\in \Omega_i\cap B(x_0,2^{i+1})$ we have
$$
\operatorname{dist}(x,\partial\Omega_i)\leq \operatorname{dist}(x,x_0)< 2^{i+1}\leq \operatorname{dist}(x,\partial\Omega_i\cap \Omega),
$$ 
thus 
$$
\operatorname{dist}(x,\partial\Omega_i)=
\operatorname{dist}(x,\partial\Omega_i\cap (\R^N\setminus \Omega))=
\operatorname{dist}(x,\partial\Omega_i\cap \partial\Omega)\geq \operatorname{dist}(x, \partial\Omega).
$$
Since the opposite inequality 
$$
\operatorname{dist}(x,\partial\Omega_i)\leq \operatorname{dist}(x, \partial\Omega)
$$
is trivial we obtain
$$
\operatorname{dist}(x,\partial\Omega_i)= \operatorname{dist}(x, \partial\Omega).
$$
Then
\begin{equation*}
    \frac{|v_i(x)|}{\operatorname{dist}(x,\partial\Omega_i)} = \frac{|v_i(x)|}{\operatorname{dist}(x,\partial\Omega)} \le \frac{|u(x)|}{d(x)}\in L^{p,\infty}_a(\Omega).
\end{equation*}
Since $\Omega_i$ is bounded, we obtain $v_i\in W^{1,p}_0(\Omega_i)$  and thus also $v_i\in W^{1,p}_0(\Omega)$. Moreover, using the dominated convergence theorem with the majorant $u$, respectively $|\nabla u|$,
$$
\lim_{i\rightarrow\infty}\left\|v_i- u\right\|_{W^{1,p}(\Omega)}
\leq
\lim_{i\rightarrow\infty}\left(\left\|u(\phi_i-1)\right\|_{L^{p}(\Omega)}+\left\||\nabla u|(\phi_i-1)\right\|_{L^{p}(\Omega)}+\left\|u|\nabla\phi_i|\right\|_{L^{p}(\Omega)}\right)
= 0,
$$
hence we conclude from the closedness of the space $W^{1,p}_0(\Omega)$ that $u\in W^{1,p}_0(\Omega)$. The proof is complete.
\end{proof}

If we moreover assume a mild regularity of domain, we obtain, as a consequence, the following theorem.

\begin{thm}\label{TH:with_regularity}
Let $p\in [1,\infty)$ and let $\Omega\subset\R^N$ be a bounded domain satisfying the condition
\begin{equation}\label{E:intersection-condition}
    W^{1,p}(\Omega)\cap W^{1,1}_0(\Omega)=W^{1,p}_0(\Omega).
\end{equation}
Let $u$ be a function such that
$$
u \in L^1(\Omega)\qquad\text{and}\qquad\left|\nabla u\right| \in L^p(\Omega)\qquad\text{and}\qquad\frac{u}{d}\in L^{1,\infty}_a(\Omega).
$$
Then
$$
u\in W^{1,p}_0(\Omega).
$$
\end{thm}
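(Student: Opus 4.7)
My plan is to leverage Theorem~\ref{TH:Lpinfty} applied at exponent~$1$ to place $u$ into $W^{1,1}_0(\Omega)$, and then to bootstrap to $W^{1,p}_0(\Omega)$ by combining the intersection hypothesis~\eqref{E:intersection-condition} with a truncation scheme. Since $\Omega$ is bounded and $|\nabla u|\in L^p(\Omega)$ with $p\ge1$, H\"older's inequality yields $|\nabla u|\in L^1(\Omega)$, and combined with $u\in L^1(\Omega)$ this gives $u\in W^{1,1}(\Omega)$. Theorem~\ref{TH:Lpinfty} at exponent~$1$, together with $u/d\in L^{1,\infty}_a(\Omega)$, then delivers $u\in W^{1,1}_0(\Omega)$. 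This already settles the case $p=1$ (for which \eqref{E:intersection-condition} is tautological), so from here on I may assume $p>1$.

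For the upgrade I truncate: set $v_k=\max\{-k,\min\{u,k\}\}$ for $k\in\N$. Since $v_k$ is bounded and $\Omega$ has finite measure, $v_k\in L^p(\Omega)$; the Sobolev chain rule gives $|\nabla v_k|=|\nabla u|\chi_{\{|u|\le k\}}\le|\nabla u|\in L^p(\Omega)$, whence $v_k\in W^{1,p}(\Omega)$. Truncation also preserves membership in $W^{1,1}_0(\Omega)$---a standard consequence of the fact that composing a $1$-Lipschitz function fixing $0$ with an approximating sequence from $C^\infty_0(\Omega)$ produces Lipschitz functions with support inside $\Omega$---so $v_k\in W^{1,1}_0(\Omega)$ as well, and hypothesis~\eqref{E:intersection-condition} forces $v_k\in W^{1,p}_0(\Omega)$ for every $k$. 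The Poincar\'e inequality on $W^{1,p}_0$ of a bounded domain then yields $\|v_k\|_{L^p(\Omega)}\le C\|\nabla v_k\|_{L^p(\Omega)}\le C\|\nabla u\|_{L^p(\Omega)}$, so $\{v_k\}$ is bounded in the reflexive space $W^{1,p}_0(\Omega)$ and a subsequence satisfies $v_{k_j}\rightharpoonup w$ weakly in $W^{1,p}_0(\Omega)$. Simultaneously $v_k\to u$ pointwise a.e.\ with $|v_k|\le|u|\in L^1(\Omega)$, so dominated convergence gives $v_k\to u$ in $L^1(\Omega)$; testing against arbitrary $\varphi\in C^\infty_0(\Omega)$ identifies $w=u$ a.e., and hence $u\in W^{1,p}_0(\Omega)$.

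The substantive analytic content has been absorbed into Theorem~\ref{TH:Lpinfty}; the only genuine obstacle remaining is that $u$ is not a priori in $W^{1,p}(\Omega)$, as the hypotheses provide $u\in L^1(\Omega)$ but not $u\in L^p(\Omega)$. The truncation argument, activated by the intersection hypothesis~\eqref{E:intersection-condition}, circumvents this by producing an $L^p$-bounded family of $W^{1,p}_0$ approximations whose weak limit must agree with~$u$; this is where one uses~\eqref{E:intersection-condition} in an essential way, since it is the only mechanism provided for converting the $W^{1,1}_0$ information obtained from Theorem~\ref{TH:Lpinfty} into the $W^{1,p}_0$ conclusion sought.
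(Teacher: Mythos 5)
Your proof is correct, and its first half coincides with the paper's: both reduce to $u\in W^{1,1}(\Omega)$ and invoke Theorem~\ref{TH:Lpinfty} with exponent $1$ to obtain $u\in W^{1,1}_0(\Omega)$. The two arguments then diverge on how to upgrade to $W^{1,p}_0(\Omega)$. The paper extends $u$ by zero to a cube $Q\supset\Omega$, observes that the extension lies in $W^{1,1}_0(Q)$ with gradient in $L^p(Q)$, and applies the Poincar\'e inequality on the Lipschitz domain $Q$ to conclude $u\in L^p(\Omega)$, hence $u\in W^{1,p}(\Omega)$; the hypothesis~\eqref{E:intersection-condition} is then used exactly once, applied to $u$ itself. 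You instead truncate, apply~\eqref{E:intersection-condition} to each truncation $v_k\in W^{1,p}(\Omega)\cap W^{1,1}_0(\Omega)$, and recover $u$ as the weak limit of the resulting bounded sequence in the reflexive space $W^{1,p}_0(\Omega)$, using the Friedrichs inequality for the uniform bound. Your route avoids the extension-to-a-cube step and the mean-value Poincar\'e inequality for functions not a priori in $W^{1,p}$, at the cost of invoking reflexivity (harmless, since you dispose of $p=1$ separately) and the fact that truncation preserves $W^{1,1}_0(\Omega)$. That last fact is standard and true, but your one-line justification glosses over the delicate point, namely the $L^1$-convergence of the gradients $\nabla T_k(\varphi_n)\to\nabla T_k(u)$ near the level set $\{|u|=k\}$; a cleaner in-paper fix is to apply Step~2 of the proof of Theorem~\ref{TH:Lpinfty} with $h=v_k$ and $g=|u|$, once one notes $|u|\in W^{1,1}_0(\Omega)$. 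With that small repair, your argument is a complete and valid alternative.
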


\begin{proof}
First note, that $\left|\nabla u\right| \in L^p(\Omega)$ implies $\left|\nabla u\right| \in L^1(\Omega)$, and thus $u\in W^{1,1}(\Omega)$. Now we employ Theorem~\ref{TH:Lpinfty} for $p=1$, and get thereby
$$
u\in W^{1,1}_0(\Omega).
$$
Since $\Omega$ is bounded, we can take a cube $Q\subset \R^N$ such that $\Omega \subset Q$. Let us denote
$$
u_0(x)=\left\{
\begin{array}{l@{\quad}l}
u(x),& x\in\Omega,\\
0,& x\in Q\setminus\Omega.
\end{array}
\right.
$$
It is not difficult to see now, owing to the characterization of the space $W^{1,1}_0(\Omega)$ in terms of approximation by smooth functions having compact support in $\Omega$, that $u_0\in W^{1,1}_0(Q)$. Moreover $\left|\nabla u_0\right| \in L^p(Q)$. Now, since $Q$ is a Lipschitz domain, we can use the Poincar\' e inequality to obtain that $u_0\in W^{1,p}(Q)$. Consequently,
$$
u\in W^{1,p}(\Omega).
$$
Finally, from the assumptions on $\Omega$ we get
$$
u\in W^{1,p}_0(\Omega).
$$
\end{proof}

Note that the implication $\left|\nabla u\right| \in L^p(\Omega)\Rightarrow u\in W^{1,p}(\Omega)$ does not hold in general, as can be seen in \cite[Section 1.1.4]{mazja1985}. Here, it is enforced by the assumption that the functions in question vanish at the boundary, and thus the extension is easy.

The exact geometrical meaning of the condition~\eqref{E:intersection-condition} is not immediately seen. However, various reasonable sufficient conditions are available in literature (see e.g.~\cite[Section 3]{HK}). More detailed discussion upon this matter will be carried out in Section~\ref{S:domains}.


\section{Relations between domains}\label{S:domains}
Recall that the principal goal of this paper is to prove an equivalence theorem in the spirit of Theorem 5.5 in \cite{edmunds2015} or Theorem 6.1 in \cite{edmunds2017}. For this, we first need to remove the assumption $u\in L^1(\Omega)$ from Theorem~\ref{TH:with_regularity}. This can be done e.g.~by adding some other appropriate assumption on regularity. Then we need to include the second implication, which is given in literature with different assumptions on the regularity of a domain. Therefore, we will now add a section explaining relations of several types of regularity of a domain.

Sufficient conditions for~\eqref{E:intersection-condition}
are given in literature (\cite[Section 3]{HK}). Especially, it is satisfied for Lipschitz domains, bounded domains with outer cone property and much more. We shall now present another one, based on a different point of view.

\begin{thm}\label{TH:domains}
Let $p\in [1,\infty)$ and let $\Omega\subset\R^N$ be a domain satisfying the outer ball portion property~\eqref{EQ:mira_kouli}.
Then~\eqref{E:intersection-condition} holds.
\end{thm}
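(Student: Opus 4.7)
The inclusion $W^{1,p}_0(\Omega) \subseteq W^{1,p}(\Omega) \cap W^{1,1}_0(\Omega)$ is immediate on the bounded domain $\Omega$, since then $L^p(\Omega) \hookrightarrow L^1(\Omega)$. The case $p=1$ is tautological, so I assume $p \in (1,\infty)$ and take $u \in W^{1,p}(\Omega) \cap W^{1,1}_0(\Omega)$. My plan is to verify the hypothesis $u/d \in L^{p,\infty}_a(\Omega)$ of Theorem~\ref{TH:Lpinfty} and let that theorem do the rest. Since on the bounded domain $\Omega$ one has $L^p(\Omega) \hookrightarrow L^{p,\infty}_a(\Omega)$ (combining the elementary embedding $L^p \hookrightarrow L^{p,\infty}$ with the dominated convergence theorem applied along sequences $E_k \to \emptyset$), it is actually enough to show $u/d \in L^p(\Omega)$.

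First I would extend $u$ by zero outside $\Omega$ to obtain $\tilde u$. Because $u \in W^{1,1}_0(\Omega)$ is a $W^{1,1}$-limit of $C_c^\infty(\Omega)$-functions, whose zero-extensions lie in $C_c^\infty(\R^N)$, one obtains $\tilde u \in W^{1,1}(\R^N)$ with weak gradient coinciding with $\nabla u$ on $\Omega$ and vanishing a.e.~on $\R^N \setminus \Omega$. Combined with $u \in L^p(\Omega)$ and $|\nabla u| \in L^p(\Omega)$, this at once upgrades to $\tilde u \in W^{1,p}(\R^N)$.

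The decisive step is then to convert the outer ball portion property into a Hardy-type inequality for $\tilde u$. The volumetric lower bound~\eqref{EQ:mira_kouli} implies, via a standard capacitary estimate of the form $\mathrm{cap}_p(E, B(x,2r)) \gtrsim r^{N-p}$ whenever $\lambda^N(E \cap B(x,r)) \geq b\,\lambda^N(B(x,r))$, that $\R^N \setminus \Omega$ is uniformly $p$-fat for every $p \in (1,\infty)$. Under uniform $p$-fatness, the Hardy inequality (of Ancona--Lewis type, also underlying~\cite{kinnunen1997}) yields
$$
  \int_\Omega \left(\frac{|\tilde u(x)|}{d(x)}\right)^p dx \;\leq\; C \int_{\R^N} |\nabla \tilde u|^p\, dx \;=\; C \int_\Omega |\nabla u|^p\, dx \;<\; \infty,
$$
so $u/d \in L^p(\Omega)$. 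An application of Theorem~\ref{TH:Lpinfty} then delivers $u \in W^{1,p}_0(\Omega)$.

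The principal obstacle will be the careful justification of this Hardy step: one has to check that the measure-density assumption~\eqref{EQ:mira_kouli} really does upgrade to uniform $p$-fatness (a standard but nontrivial capacity computation), and that the Hardy inequality applies to the zero-extension $\tilde u$, which a priori is only a $W^{1,p}(\R^N)$-function vanishing outside $\Omega$ rather than a limit of compactly supported smooth functions in $\Omega$. Both ingredients are classical in nonlinear potential theory (Maz'ya, Ancona, Lewis, Heinonen--Kilpel\"ainen--Martio), and the argument will either quote the relevant capacitary lemma or reprove it in the form suitable to the present setting.
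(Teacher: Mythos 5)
Your overall strategy is sound and, at the skeleton level, parallel to the paper's: both proofs reduce the nontrivial inclusion to a Hardy-type estimate for $u/d$ derived from the outer ball portion property, and then invoke a sufficiency theorem ($u/d$ small in some space plus $|\nabla u|\in L^p(\Omega)$ implies $u\in W^{1,p}_0(\Omega)$). The difference lies in how the Hardy step is obtained. The paper stays entirely measure-theoretic: it applies the pointwise Hardy inequality of Haj{\l}asz type (\cite[Lemma 4.4]{edmunds2015}, originating in \cite{hajlasz1999}), which is stated precisely for domains with the outer ball portion property~\eqref{EQ:mira_kouli} and for functions in $W^{1,1}_0(\Omega)$ --- exactly the class you have --- namely $\frac{|u(x)|}{d(x)}\leq C\,M_{2d(x)}\bigl(|\nabla u|\chi_{B(x,d(x))}\bigr)(x)$; integrating this (the paper only extracts $u/d\in L^1(\Omega)$ and then quotes \cite[Theorem 5.4]{edmunds2015}, though one could equally get $u/d\in L^p(\Omega)$ via the maximal theorem and feed it into Theorem~\ref{TH:Lpinfty}, as you propose). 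You instead detour through nonlinear potential theory: measure density $\Rightarrow$ uniform $p$-fatness $\Rightarrow$ capacitary Hardy inequality of Ancona--Lewis type. This buys generality (fatness is strictly weaker than the ball portion property) at the cost of heavier machinery.

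The one place your route needs genuine care --- and you correctly flag it --- is the risk of circularity: the capacitary Hardy inequality is classically proved for $C^{\infty}_0(\Omega)$, i.e.\ for the very class you are trying to show $u$ belongs to. To apply it to the zero extension $\tilde u\in W^{1,p}(\R^N)$ you must know that the quasicontinuous representative of $\tilde u$ vanishes $p$-quasi-everywhere on $\R^N\setminus\Omega$, not merely a.e.; only then does Maz'ya's capacitary Poincar\'e inequality (the engine behind the fatness-based pointwise Hardy estimate, and behind \cite[Theorem 3.13]{kinnunen1997}) apply. This q.e.\ vanishing is \emph{not} a consequence of fatness alone --- it is rescued precisely by the measure density condition~\eqref{EQ:mira_kouli}, since every point of the closed set $\R^N\setminus\Omega$ is then a point of measure density at least $b$, so every Lebesgue point of $\tilde u$ in that set forces the value $0$. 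Note, however, that once you have established q.e.\ vanishing you could bypass Hardy altogether and conclude $u\in W^{1,p}_0(\Omega)$ directly from the Havin--Bagby/Hedberg characterization (see \cite{HK}); conversely, if you want to keep the Hardy route honest without capacities, you are led back to the measure-density pointwise inequality that the paper quotes. So the proposal is workable, but its ``principal obstacle'' is exactly where the outer ball portion property must be used a second time, and spelling that out essentially reproduces the paper's argument or replaces it with a strictly heavier one.
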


\begin{proof}
For any bounded set $\Omega$, the inclusion
$$
W^{1,p}(\Omega)\cap W^{1,1}_0(\Omega)\supset W^{1,p}_0(\Omega)
$$
follows from embeddings of Sobolev spaces. Let us prove the converse inclusion. For $p=1$, the assertion is trivial. Let us prove it for $p>1$.

Let $\Omega$ be a domain satisfying the outer ball portion property with corresponding positive constants $b$ and $r_0$ (see Definition~\ref{TH:doamins}). We assume that $u\in W^{1,p}(\Omega)\cap W^{1,1}_0(\Omega)$. Then $\left|\nabla u\right|$ in $L^p(\Omega)$. Thus, using the boundedness of the maximal operator, we get that $M(\left|\nabla u\right|)
\in L^p(\Omega)$. Moreover,
$$
M_{2d(x)}(\left|\nabla u\right|\chi_{B(x,d(x))})(x)
\leq
M_{2d(x)}(\left|\nabla u\right|)(x)
\leq
M(\left|\nabla u\right|)(x)
$$
for each $x\in\Omega,$ 
and, consequently, $M_{2d(x)}(\left|\nabla u\right|\chi_{B(x,d(x))})\in L^1(\Omega)$ owing to $L^p(\Omega)\hookrightarrow L^1(\Omega)$. Now, we apply \cite[Lemma 4.4]{edmunds2015}, which is formulated for domains satisfying the outer ball portion property and for $u\in W^{1,1}_0(\Omega)$. (This lemma originally appeared in \cite{hajlasz1999} for functions from $C_0^{\infty}(\Omega)$.) We get that there exists a positive constant $C(b)$ such that
$$
\frac{\left|u(x)\right|}{d(x)}\leq C(b) M_{2d(x)}(\left|\nabla u\right|\chi_{B(x,d(x))})(x)
$$
for all $x\in\Omega, d(x)<r_0$. Moreover, $\frac{\left|u(x)\right|}{d(x)}\leq \frac{\left|u(x)\right|}{r_0}$ for $x\in\Omega, d(x)\geq r_0$.
Thus,
$$
\frac{u}{d}\in L^1(\Omega).
$$
Together with the fact that $\left|\nabla u\right|$ in $L^p(\Omega)$, on employing \cite[Theorem 5.4]{edmunds2015}, this guarantees that $u\in W^{1,p}_0(\Omega)$, establishing our claim.
\end{proof}

Now let us point out that the outer ball portion property of $\Omega$ is independent of whether or not the condition \eqref{EQ:isoperimetric_s} on isoperimetric profile is satisfied. We will present two examples.

\begin{example}[domain having the outer ball portion property but not $I_{\Omega}(s)\geq Cs$: rooms and passages]\label{EX:balls}
Let $\Omega$ be the union of ``rooms and passages'' such that ball-shaped rooms have radius $2^{-k}$ and passages are of length $2^{-k}$ and width $2^{-4k}$. See Figure~\ref{FIG:koule_a_tunely}. Note that $2^{-4}\pi< \frac{\lambda^2(\Omega)}2<2^{-2}\pi$, in which the upper bound is the volume of the largest room.
\begin{figure}[ht]
  \centering
  \includegraphics[width=7cm]{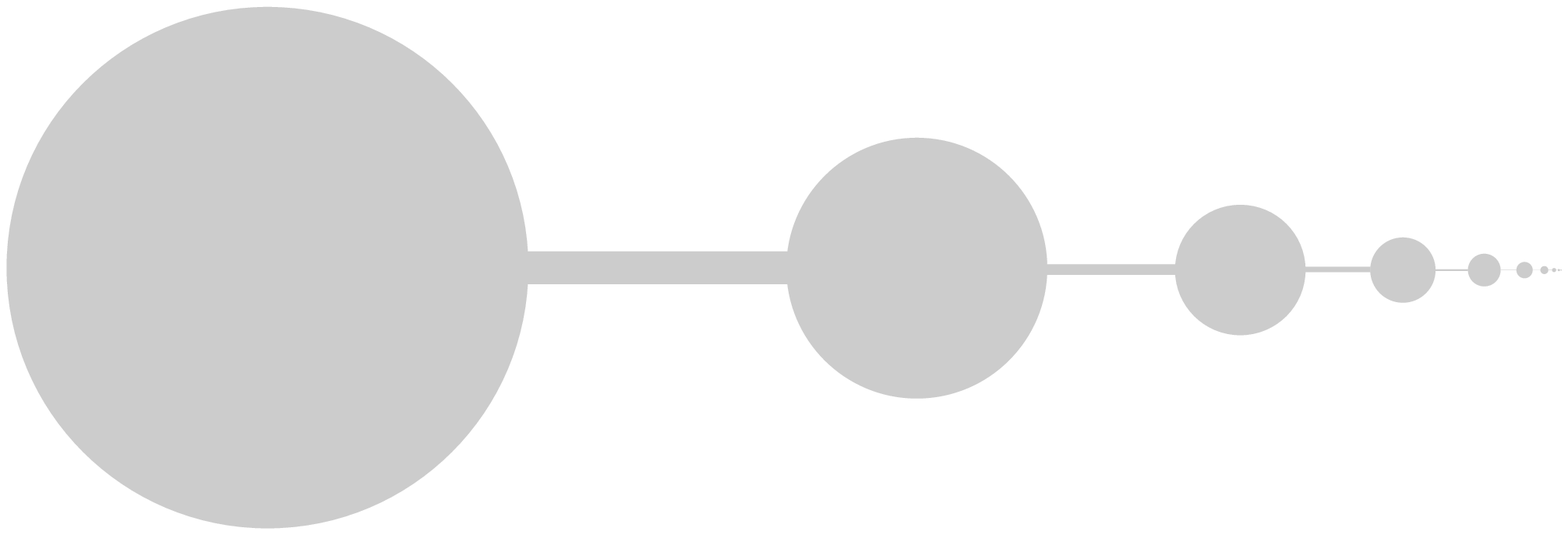}
	\centering
  \caption{Example~\ref{EX:balls}}
  \label{FIG:koule_a_tunely}
\end{figure}

Then it is obvious that $\Omega$ possesses the outer ball portion property, but $I_{\Omega}(s)\leq C s^2$ near $0$ for $C>0$. Indeed, for each $s\in(0,2^{-4}\pi)$, there exists
$k\in\N$ such that
$$
2^{-2(k+1)}\pi\leq s<2^{-2k}\pi,
$$
and we can find $E\subset\Omega$ and $s\leq\lambda^2(E)\leq \frac{\lambda^2(\Omega)}2$ such that $E$ contains a room of measure $2^{-2k}\pi$ and smaller rooms with passages such that
$$
P(E,\Omega)
<
2^{-4(k-1)}
=
2^82^{-4(k+1)}
\leq
\pi^{-2}2^8s^2.
$$
Our claim now follows from the very definition of $I_{\Omega}$.
\end{example}

Note that the core of this example is that we can setup width of passages sufficiently small.

\begin{example}[domain satisfying $I_{\Omega}(s)\geq Cs$ without the outer ball portion property]\label{EX:squares}
Let us set $I_1=\left(\frac12,1\right)$,
$I_k=\left(2^{-k},2^{-k+1}-2^{-2k}\right)$, $k=2,3,\dots$, and
\begin{align*}
M &=\bigcup_{k=1}^{\infty} I_k
\end{align*}
and define the domain $\Omega$ by
$$
\Omega=\left\{(x,y)\in\R^2,-1<x<1,-1<y<\Phi(x)\right\},
$$
in which
\begin{equation*}
    \Phi(x) = \sum_{k=1}^{\infty}\lambda^1(I_k)\chi_{I_k}(x).
\end{equation*}
Then $\Omega$ is a union of squares, see Figure~\ref{FIG:panelaky_ctvercove}.
\begin{figure}[ht]
  \centering
  \includegraphics[width=7cm]{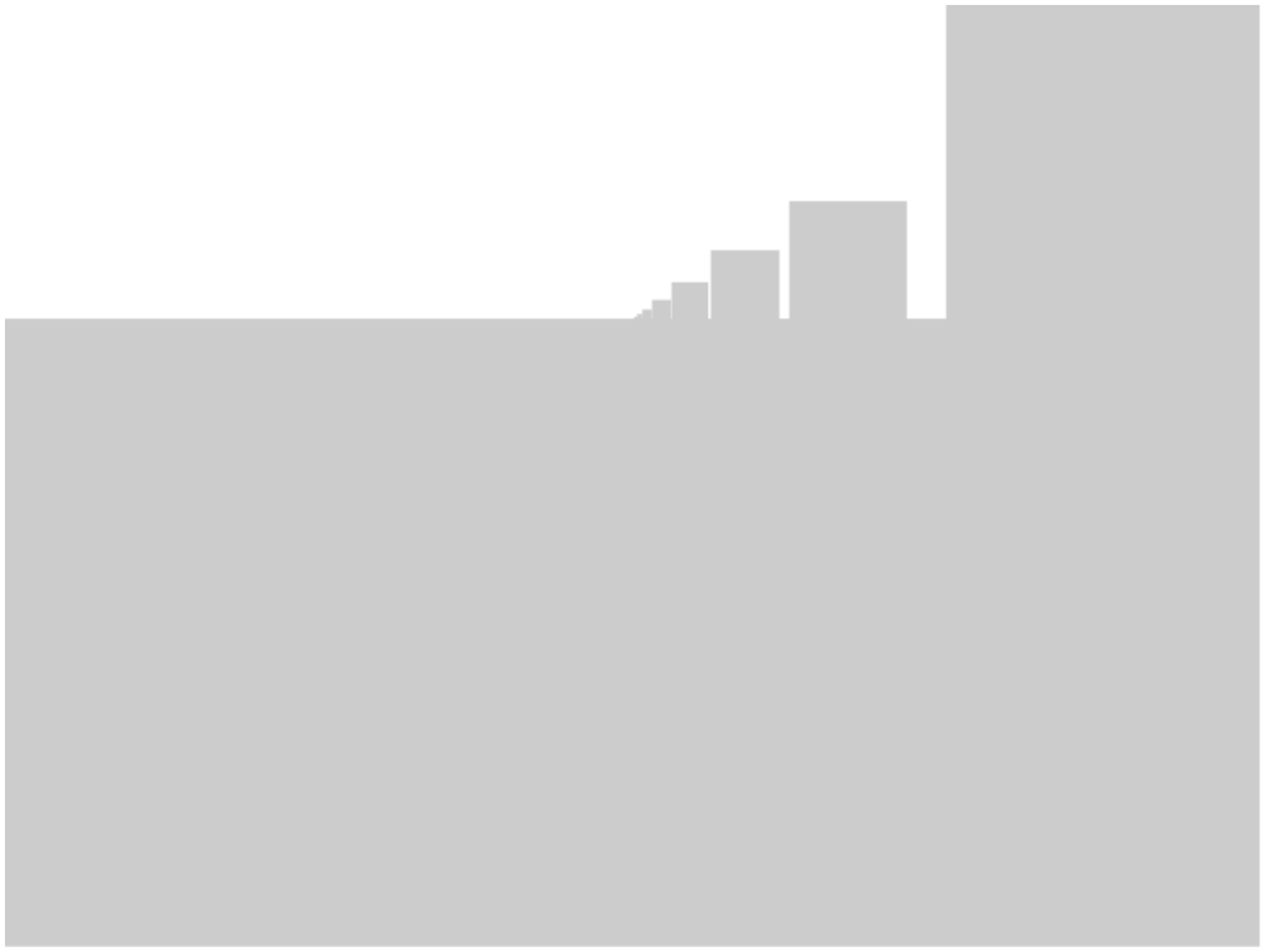}
  \caption{Example~\ref{EX:squares}}
  \label{FIG:panelaky_ctvercove}
\end{figure}

It is easy to see that $\Omega$ satisfies the inner cone property, and thus its isoperimeric profile is $I_{\Omega}(s)\approx \sqrt{s}>s$ for $s>0$ near $0$. Now let us explain why the outer ball portion property is not fulfilled. We will focus on points $z=(x,0)$, $x\in(0,1)\setminus M$. For a fixed $k\in\N$, we find $z_k=(x,0)$ such that $x\in(2^{-k},2^{-(k-1)})\setminus M$, and we take the ball $B(z_k,2^{-k}-2^{-2k})$. Then,
\begin{equation*}
\frac{\lambda^2(B(z_k,2^{-k}-2^{-2k})\cap (\R^2\setminus \Omega))}{\lambda^2(B(z_k,2^{-k}-2^{-2k}))}
\leq
\frac{(2^{-k}-2^{-2k})2^{-2k}}{\pi(2^{-k}-2^{-2k})^2}
=
\frac{1}{\pi(2^{k}-1)}
\xrightarrow{k\rightarrow\infty}0.
\end{equation*}
Now, for each $r_0,b>0$ we can find $k\in \N$ such that $\frac{1}{\pi(2^{k}-1)}<b$ and $2^{-k}-2^{-2k}<r_0$. Finally, following the preceding argument, we find $z\in\partial\Omega$ and $r\in(0,r_0)$ such that
$$
\frac{\lambda^2(B(z,r)\cap (\R^2\setminus \Omega))}{\lambda^2(B(z,r))}<b,
$$
which clearly negates the outer ball portion property.
\end{example}

Now let us introduce for the sake of completeness two more examples of domains which satisfy both the conditions, but at the same time are sufficiently untidy in order not to have the outer cone property. The first of them even has the inner cone property.

\begin{example}\label{EX:crocodile}
Let us define two linear polygonal lines $A=\left\{\left[x,a(x)\right], x\in[0,1]\right\}$ and $B=\left\{\left[x,b(x)\right], x\in[0,1]\right\}$, where functions $a$ and $b$ are given by
\begin{align*}
a(0)=0, a(1)=0, a\left(\frac1{3^k}\right)=0, a\left(\frac2{3^k}\right)= \frac1{3^k}, \text{ linearly connected},\\
b(0)=0, b(1)=-\frac12, a\left(\frac1{3^k}\right)=-\frac12\frac1{3^k}, a\left(\frac2{3^k}\right)= 0, \text{ linearly connected},
\end{align*}
where $k\in\N$.
We define $\Omega$ as the polygon bounded by lines $A$, $\left\{[x,y],x=1,y\in[0,1]\right\}$, $\left\{[x,y],x\in[-1,1],y=1\right\}$, $\left\{[x,y],x=-1,y\in[-1,1]\right\}$, $\left\{[x,y],x\in[-1,1],y=-1\right\}$, $\left\{[x,y],x=1,y\in[-1,-\frac12]\right\}$ and $B$, see Figure~\ref{FIG:crocodile}.
\begin{figure}[ht]
  \centering
  \includegraphics[width=7cm]{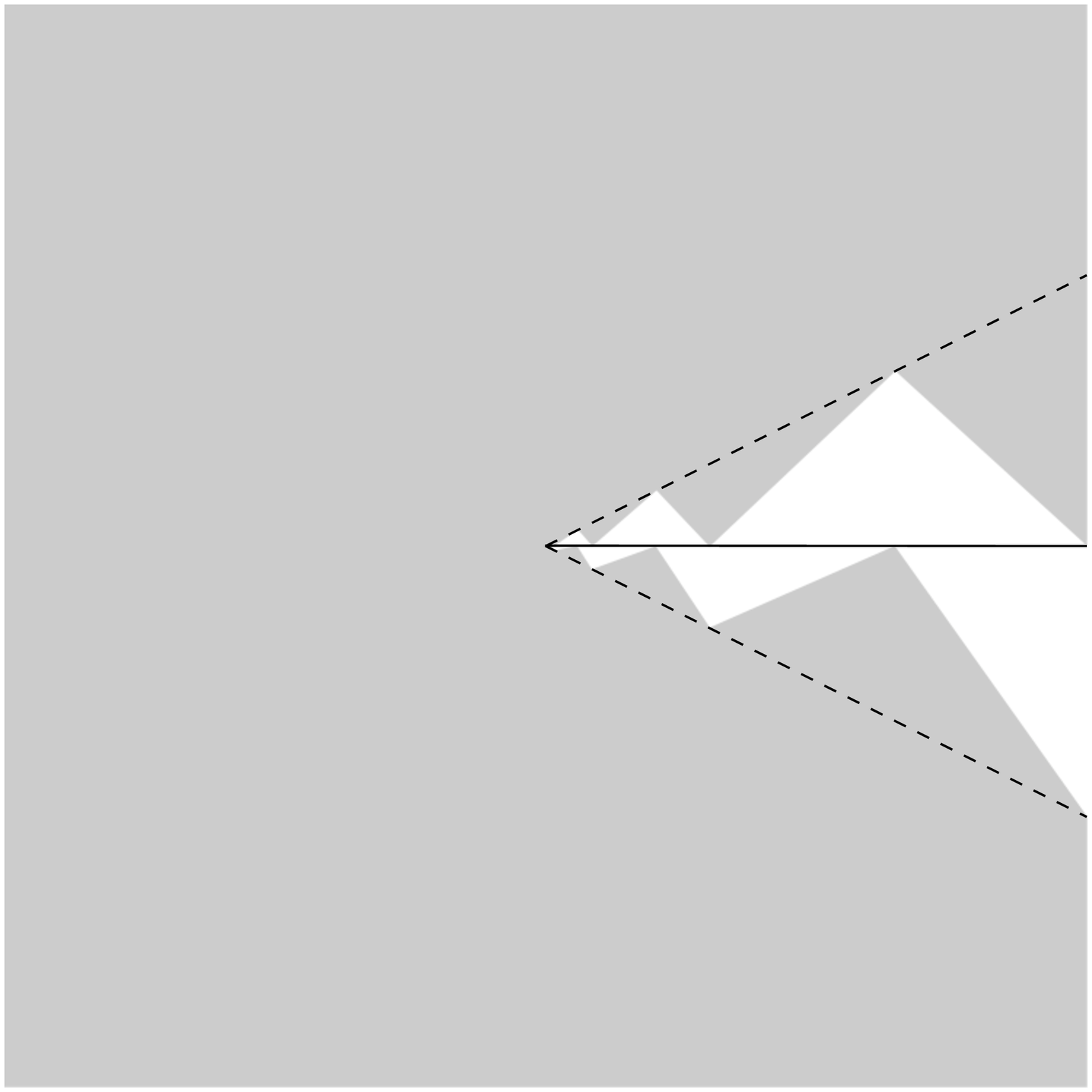}
  \caption{Example~\ref{EX:crocodile}}
  \label{FIG:crocodile}
\end{figure}
Then is easy to see that $\Omega$ satisfies the inner cone property (with the cone of angle $\frac{\pi}4-\varepsilon$ with $\varepsilon\in(0,\frac{\pi}4)$ fixed), and thus its isoperimeric profile is $I_{\Omega}(s)\approx \sqrt{s}>s$ for $s>0$ near $0$. Moreover, it is obvious that $\Omega$ does not satisfy the outer cone property at the point $[0,0]$. Let us comment on why it possesses the outer ball portion property.

It is obvious that the polygon $\Omega_s$, bounded by the lines $\left\{[x,y],x\in[0,1],y=0\right\}$, $\left\{[x,y],x=1,y\in[0,1]\right\}$, $\left\{[x,y],x\in[-1,1],y=1\right\}$, $\left\{[x,y],x=-1,y\in[-1,1]\right\}$, $\left\{[x,y],x\in[-1,1],y=-1\right\}$, $\left\{[x,y],x=1,y\in[-1,-\frac12]\right\}$ and $\left\{[x,y],x\in[0,1],y=-\frac12x\right\}$, possesses the outer cone property, hence also the outer ball portion property. Now, from the formula for the volume of a trapezoid, we see that the corresponding measures needed in this definition are equal for $\Omega_s$ and $\Omega$. For better understanding we can use here squares instead of balls in the definition of the outer ball portion property.
\end{example}

In our last example we shall employ a domain, introduced in \cite{edmunds2015}, which has the outer ball portion property, but does not possess the outer cone property.

\begin{example}\label{EX:skyscrapers}
Let $Q_0=(-1,1)\times(-1,0)$ and for $k\in \N$ we set
$$
Q_k=(2^{-k},2^{-k}+2^{-k-3})\times[0,1).
$$
We define the domain $\Omega$ by
$$
\Omega=\bigcup_{k=0}^{\infty}Q_k,
$$
see Figure~\ref{FIG:panelaky_uzke}.
\begin{figure}[ht]
  \centering
  \includegraphics[width=7cm]{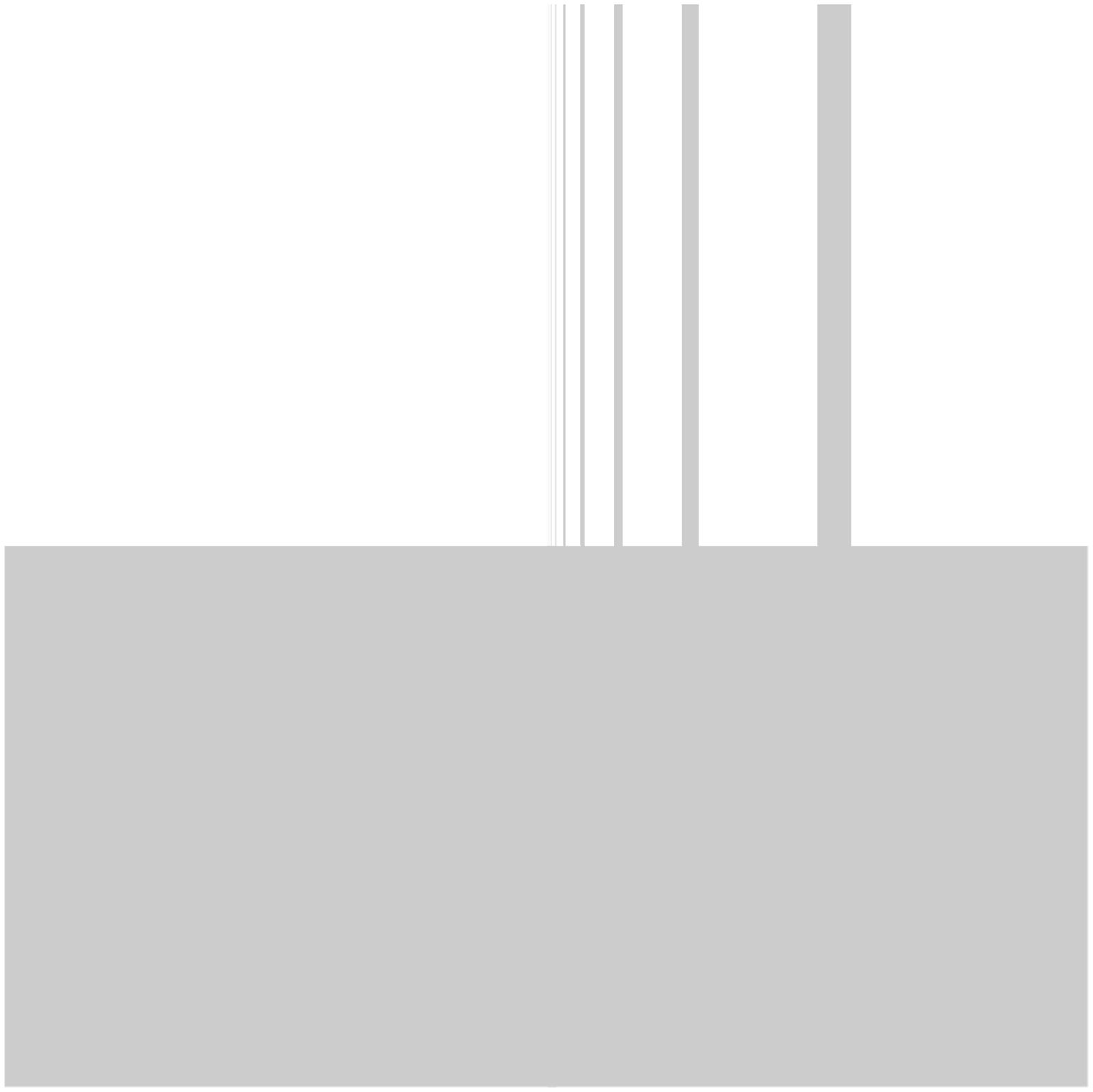}
  \caption{Example~\ref{EX:skyscrapers}}
  \label{FIG:panelaky_uzke}
\end{figure}
Then from \cite[Theorem 3.4]{edmunds2015} we have that $\Omega$ possesses the outer ball portion property and \cite[Theorem 3.5]{edmunds2015} yields that $\Omega$ does not satisfy the outer cone property. Here, we will show that
\begin{equation}\label{E:I-estimate}
    I_{\Omega}(s)\geq \frac{s}{\sqrt{2}}\quad\text{for every $s\in\left(0,\frac{\lambda^2(\Omega)}{2}\right)$.}
\end{equation}
Thus, fix $s\in\left(0,\frac{\lambda^2(\Omega)}{2}\right)$. Take arbitrary $t\in\left[s,\frac{\lambda^2(\Omega)}{2}\right]$ and an arbitrary measurable set $E\subset\Omega$ such that $\lambda^2(E)=t$. Denote further
$$
E_k=E\cap Q_k \quad\text{for $k\in \N\cup\{0\}$.}
$$
It will be useful to note that, since $\lambda^2(E_0)\le\lambda^2(Q_0)=2$, one has
\begin{equation}\label{E:sqrt}
    \sqrt{\lambda^2(E_0)} \ge \frac{\lambda^2(E_0)}{\sqrt 2}.
\end{equation}
Altogether, using subsequently~Lemma~\ref{L:multiperimeter}, the isoperimetric inequality, Lemma~\ref{TH:I_rectangle} with $a=2^{-k-3}$, the estimate \eqref{E:sqrt}, and the fact that $\lambda^2(E_k)\leq\lambda^2(Q_k)=2^{-k-3}$ for every $k\in\N$, we obtain
\begin{align*}
P(E,\Omega)
&\geq
P(E_0,Q_0)+\sum_{k=1}^{\infty}P(E_k,Q_k)\geq
I_{Q_0}(\lambda^2(E_0))+\sum_{k=1}^{\infty}I_{Q_k}(\lambda^2(E_k))\\
&\geq
\sqrt{2\lambda^2(E_0)}+\sum_{k=1}^{\infty}\sqrt{2^{-k-2}}\sqrt{\lambda^2(E_k)}
\geq
\sqrt{2\lambda^2(E_0)}+\sum_{k=1}^{\infty}\sqrt{\frac{\lambda^2(E_k)}{2}}\sqrt{\lambda^2(E_k)}\\
&\geq
\lambda^2(E_0)+\sum_{k=1}^{\infty}\frac{\lambda^2(E_k)}{\sqrt{2}}
\ge
\frac{\lambda^2(E)}{\sqrt{2}}=\frac{t}{\sqrt{2}}
\ge
\frac{s}{\sqrt{2}},
\end{align*}
establishing the desired estimate~\eqref{E:I-estimate}. The proof is complete.
\end{example}

In the corollary that follows, we are going to use the assumption that the domain in question is ``sufficiently regular''. By that we mean the combination of three separate requirements which we shall now explain in detail. First, we require that $\Omega$ obeys~\eqref{E:intersection-condition} (in order to enable us to use Theorem~\ref{TH:with_regularity}). Second, we want that $u\in L^1(\Omega)$ whenever $\left|\nabla u\right|\in L^p(\Omega)$. And, finally, we demand that the ``converse implication'' holds, namely that $\frac{u}{d}\in L^{1,\infty}_a(\Omega)$ provided that $u\in W^{1,p}_0(\Omega)$.

Various sufficient conditions for~\eqref{E:intersection-condition} can be found in~\cite[Section 3]{HK}. Furthermore, due to Theorem~\ref{TH:domains}, we also know that the outer ball portion property~\eqref{EQ:mira_kouli} does the job. The second requirement can be enforced for example by a suitable lower bound for the isoperimetric profile~\eqref{EQ:isoperimetric_s} (see~\cite{CPS}). The third requirement is guaranteed by, for example, the Lipschitz condition (\cite{kadlec1966}), the outer cone condition (\cite{edmunds1987}), the outer ball portion property (\cite{edmunds2015}), or an appropriate capacitary condition (\cite{kinnunen1997}).

We can conclude from the above-given survey of relations between particular conditions that, for example, any domain $\Omega$, obeying simultaneously the outer ball portion property and the estimate  $I_{\Omega}(s)\geq C s$ near $0$, is sufficiently regular, and that neither of these two conditions implies the other one.

\begin{cor}\label{TH:equivalence}
Let $p\in (1,\infty)$ and let $\Omega\subset\R^N$ be a sufficiently regular domain in the sense specified in the preceding paragraph.
Then
$$
\left|\nabla u\right| \in L^p(\Omega)\qquad\text{and}\qquad\frac{u}{d}\in L^{1,\infty}_a(\Omega)
$$
if and only if
$$
u\in W^{1,p}_0(\Omega).
$$
\end{cor}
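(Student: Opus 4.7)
The corollary packages three already-established pieces of information into an equivalence, so my proof plan is essentially an assembly argument rather than a construction of new machinery. I would split along the two implications.

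For the implication $u\in W^{1,p}_0(\Omega)\Rightarrow (|\nabla u|\in L^p(\Omega)\ \text{and}\ u/d\in L^{1,\infty}_a(\Omega))$, the first conjunct is immediate from the definition of $W^{1,p}(\Omega)$. The second conjunct is precisely the third ``requirement'' built into the notion of sufficient regularity in the paragraph preceding the statement. I would simply invoke it: under any of the classical regularity conditions cited there (Lipschitz, outer cone, outer ball portion, or the capacitary condition of \cite{kinnunen1997}), one obtains $u/d\in L^q(\Omega)$ for some $q\geq 1$ by results from \cite{kadlec1966,edmunds1987,edmunds2015,kinnunen1997}, and because $\Omega$ is bounded this embeds into $L^{1,\infty}_a(\Omega)$ (the $L^{1,\infty}$ quasinorm of $f\chi_{E_k}$ is dominated by $\|f\chi_{E_k}\|_{L^1}$, which tends to $0$ by dominated convergence whenever $E_k\to\emptyset$).

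For the reverse implication, I start from $|\nabla u|\in L^p(\Omega)$ and $u/d\in L^{1,\infty}_a(\Omega)$. The second ingredient of sufficient regularity, namely the isoperimetric lower bound~\eqref{EQ:isoperimetric_s}, is by the result of \cite{CPS} exactly what is needed to upgrade $|\nabla u|\in L^p(\Omega)$ to $u\in L^1(\Omega)$. Now $u$ satisfies the full hypothesis list of Theorem~\ref{TH:with_regularity}, namely $u\in L^1(\Omega)$, $|\nabla u|\in L^p(\Omega)$, and $u/d\in L^{1,\infty}_a(\Omega)$. The first ingredient of sufficient regularity is precisely the intersection condition~\eqref{E:intersection-condition} that Theorem~\ref{TH:with_regularity} requires, so the theorem applies and yields $u\in W^{1,p}_0(\Omega)$.

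There is no genuine obstacle here; the substantive work has already been done in Theorem~\ref{TH:Lpinfty} (where the distance-function/approximation argument lives) and in Theorem~\ref{TH:with_regularity} (where the extension-by-zero step promotes $W^{1,1}_0$-membership to $W^{1,p}_0$-membership). What the corollary really does is to expose the minimal packet of geometric hypotheses on $\Omega$ that makes these two theorems applicable and at the same time secures the Hardy-type converse. I would emphasize that the three constituent requirements of ``sufficient regularity'' are logically independent, as illustrated by Examples~\ref{EX:balls}--\ref{EX:skyscrapers}, so none of them can be dispensed with on the basis of the others alone.
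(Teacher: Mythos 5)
Your proposal is correct and follows essentially the same route as the paper: the `if' direction via the isoperimetric bound and \cite[Corollary 4.3]{CPS} to secure $u\in L^1(\Omega)$ and then Theorem~\ref{TH:with_regularity} under the intersection condition~\eqref{E:intersection-condition}, and the `only if' direction via the cited Hardy-type results followed by Lorentz-space embeddings into $L^{1,\infty}_a(\Omega)$ (your direct domination of the $L^{1,\infty}$ quasinorm by the $L^1$ norm is just an inlined version of the paper's appeal to Proposition~\ref{TH:L1qsubset}). No gaps.
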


\begin{proof}
Since $p\in (1,\infty)$ and $\Omega$ is a~bounded domain satisfying~\eqref{E:intersection-condition}
and \eqref{EQ:isoperimetric_s}, the `if' part follows owing to \cite[Corollary 4.3]{CPS}, which ensures $u\in L^1(\Omega)$, and Theorem~\ref{TH:with_regularity}. The `only if' part can be proved using \cite[Remark 3.18]{kinnunen1997} or \cite[Proposition 5.1]{edmunds2015}, which are all stated for $p\in (1,\infty)$, and subsequent applying of embedding theorems between Lorentz spaces combined with Proposition~\ref{TH:L1qsubset}.
\end{proof}

\begin{proof}[Proof of Theorem~\ref{T:main}]
The assertion of Theorem~\ref{T:main} now follows from Corollary~\ref{TH:equivalence} upon a correct interpretation of the sufficient regularity of a domain (one only needs to realize that, as mentioned in the paragraph preceding the corollary, the outer ball portion property accompanied by~\eqref{E:iso} is enough).
\end{proof}

\section{On dimension one}\label{S:dimension1}
The dimension one has a somewhat special position in the research of Sobolev spaces. For the sake of completeness, we will state an application of our result in the dimension one. First, we need an observation, which is natural and similar to other ones used in higher dimensions, but in the dimension one it seems to be slightly neglected. In order to keep the paper self-contained and complete, we include a short elementary proof. The result appears in various forms in literature, but usually with some assumptions slightly overlapping with those we need (see e.g.~\cite{leoni2009} or~\cite[Corolary~V.3.21]{edmunds1987}, where the result is stated for arbitrary order of Sobolev space, but on the other hand only for $p>1$).


\begin{lemma}\label{TH:impl_in_1}
Let $p\in[1,\infty)$, $a,b \in\R$, $a<b$, and $u\in W^{1,p}_0(a,b)$. Then
\begin{equation}\label{EQ:limits}
\lim_{t\rightarrow a+} u(t)=0\qquad\text{and}\qquad\lim_{t\rightarrow b-} u(t)=0.
\end{equation}
\end{lemma}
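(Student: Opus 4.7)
The plan is to exploit the definition of $W^{1,p}_0(a,b)$ as the closure of $C_0^\infty(a,b)$, together with the fundamental theorem of calculus in one dimension, to pass the boundary vanishing from the approximating smooth functions over to $u$ itself (in its absolutely continuous representative).

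First, since $(a,b)$ is bounded, we have $L^p(a,b)\hookrightarrow L^1(a,b)$, so $u\in W^{1,1}(a,b)$, whence $u$ admits an absolutely continuous representative (which we fix from now on); in particular the one-sided limits in~\eqref{EQ:limits} exist in $\R$. Next, I would pick a sequence $\{u_n\}_{n=1}^\infty\subset C_0^\infty(a,b)$ with $\|u-u_n\|_{W^{1,p}(a,b)}\to 0$. Because each $u_n$ has compact support in $(a,b)$, the fundamental theorem of calculus gives
\begin{equation*}
    u_n(t) = \int_a^t u_n'(\tau)\,d\tau \quad\text{for every $t\in(a,b)$,\ \ and\ \ }\int_a^b u_n'(\tau)\,d\tau = 0.
\end{equation*}

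Now $u_n'\to u'$ in $L^p(a,b)$, and hence (by the embedding above) also in $L^1(a,b)$. Consequently, for every $t\in(a,b)$,
\begin{equation*}
    \left|u_n(t)-\int_a^t u'(\tau)\,d\tau\right|
    \le \|u_n'-u'\|_{L^1(a,b)} \xrightarrow{n\to\infty} 0,
\end{equation*}
so $u_n$ converges uniformly on $(a,b)$ to the function $v(t):=\int_a^t u'(\tau)\,d\tau$. On the other hand, $u_n\to u$ in $L^p(a,b)$, so some subsequence tends to $u$ almost everywhere. Thus $u=v$ a.e., and since both $u$ and $v$ are absolutely continuous, $u(t)=\int_a^t u'(\tau)\,d\tau$ pointwise on $(a,b)$. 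The absolute continuity of the Lebesgue integral immediately yields $\lim_{t\to a+}u(t)=0$. Letting $n\to\infty$ in the identity $\int_a^b u_n'=0$ gives $\int_a^b u'=0$, whence $\lim_{t\to b-}u(t)=\int_a^b u'(\tau)\,d\tau =0$.

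There is no serious obstacle here; the only point requiring a little care is the justification that $u$ agrees with its continuous representative $v(t)=\int_a^t u'$, which is handled cleanly by combining uniform convergence of $u_n$ to $v$ with $L^p$-convergence of $u_n$ to $u$.
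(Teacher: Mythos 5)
Your proof is correct. It takes a related but genuinely different route from the paper's. The paper first establishes the embedding $W^{1,p}(a,b)\hookrightarrow C([a,b])$ via the mean value theorem and H\"older's inequality, and then argues by contradiction: if $\delta=\lim_{t\to a+}|u(t)|>0$, choosing $\varphi\in C^\infty_0(a,b)$ with $\|u-\varphi\|_{W^{1,p}(a,b)}<\delta/C$ forces $\delta=\lim_{t\to a+}|u(t)-\varphi(t)|\le C\|u-\varphi\|_{W^{1,p}(a,b)}<\delta$. You instead bypass the sup-norm embedding entirely: from $u_n'\to u'$ in $L^1(a,b)$ you get uniform convergence of $u_n$ to $v(t)=\int_a^t u'(\tau)\,d\tau$, identify $u$ with $v$ through the a.e.\ convergent subsequence, and read off both boundary limits from the absolute continuity of the Lebesgue integral together with $\int_a^b u'=0$. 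Your argument yields slightly more, namely the explicit representation $u(t)=\int_a^t u'(\tau)\,d\tau$ with vanishing total integral, which encodes both endpoint conditions at once; the paper's version isolates a reusable quantitative sup-norm estimate but needs a separate (symmetric) argument at each endpoint. Every step of yours checks out: the passage from $\|u_n'-u'\|_{L^p}\to0$ to $L^1$-convergence uses the boundedness of $(a,b)$, and the identification $u=v$ everywhere is legitimate because two continuous functions agreeing a.e.\ on an interval agree pointwise.
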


\begin{proof}
Recall that a function $u\in W^{1,p}(a,b)$, $p\in[1,\infty)$, has a representative from $AC(a,b)$, here denoted $u$ again, such that $u$ is continuous and bounded on $(a,b)$, $u\in BV(a,b)$, and there exist finite limits $\lim_{t\rightarrow a+}u(t)$ and $\lim_{t\rightarrow b-}u(t)$ (survey of these results can be seen in \S2.1, \S3.1 and \S7.2 of Part 1 of the book \cite{leoni2009}).
Moreover, $W^{1,p}(a,b)\hookrightarrow C([a,b])$,~i.e. there exists a constant $C>0$ such that for each $u\in W^{1,p}(a,b)$ we have
$$
\sup_{t\in(a,b)}\left|u(t)\right|\leq C \left\|u\right\|_{W^{1,p}(a,b)}.
$$
Indeed, using the mean value theorem for an absolutely continuous extension of the function $u$ to $[a,b]$, one can find $t_0\in [a,b]$ such that
$$
u(t_0)=\frac1{b-a}\int_{a}^{b}u(t)\,dt.
$$
Then for all $t\in [a,b]$ we have
\begin{align*}
\left|u(t)\right|&=\left|u(t_0)+ \int_{t_0}^{t}u'(\tau)\,d\tau\right|
\leq
(b-a)^{-\frac1{p}}\left(\int_{a}^{b}\left|u(\tau)\right|^p\,d\tau\right)^{\frac1{p}}+(b-a)^{1-\frac1{p}}\left(\int_{a}^{b}\left|u'(\tau)\right|^p\,d\tau\right)^{\frac1{p}}\\
&\leq(b-a)^{-\frac1{p}}\max\{1,b-a\}\left\|u\right\|_{W^{1,p}(a,b)},
\end{align*}
where we used the triangle inequality and the H\"older inequality.

Let $u\in W_0^{1,p}(a,b)$ and denote $\delta =\lim_{t\rightarrow a+}\left|u(t)\right|$. Let us assume $\delta>0$ and take $\varepsilon<\frac{\delta}{C}.$ Since $u\in W_0^{1,p}(a,b)$, we can find $\varphi\in C^{\infty}_0(a,b)$ such that
$$
\left\|u-\varphi\right\|_{W^{1,p}(a,b)}<\varepsilon.
$$
We have
$$
\delta
=
\lim_{t\rightarrow a+}\left|u(t)\right|
=
\lim_{t\rightarrow a+}\left|u(t)-\varphi(t)\right|
\leq
\sup_{t\in(a,b)}\left|u(t)-\varphi(t)\right|
\leq
C \left\|u-\varphi\right\|_{W^{1,p}(a,b)}
\leq
C \varepsilon<\delta,
$$
which is a contradiction. Thus $\lim_{x\rightarrow a+} u(x)=0$. Analogously, $\lim_{x\rightarrow b-} u(x)=0$.
\end{proof}

\begin{proposition}\label{TH:equiv_in_1}
Let $p\in[1,\infty)$,  $a,b \in\R$, $a<b$, and $u\in W^{1,p}(a,b)$. Then
$u\in W^{1,p}_0(a,b)$ if and only if \eqref{EQ:limits} is satisfied.
\end{proposition}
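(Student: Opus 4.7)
The `only if' direction is precisely Lemma~\ref{TH:impl_in_1}, so all the work goes into the converse. My plan is to assume $u\in W^{1,p}(a,b)$ with $\lim_{t\to a+}u(t)=\lim_{t\to b-}u(t)=0$ and to construct an approximating sequence in $C^\infty_0(a,b)$ in two stages: first a truncation that produces functions with compact support in $(a,b)$, then a mollification to reach smoothness.

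For the first stage I would use the standard truncation at height $\varepsilon$, namely
\begin{equation*}
T_\varepsilon u(t)=\operatorname{sgn}(u(t))\,\bigl(|u(t)|-\varepsilon\bigr)_+,\qquad \varepsilon>0.
\end{equation*}
Working with the continuous representative, the hypothesis that $u$ vanishes at both endpoints supplies $\delta=\delta(\varepsilon)>0$ with $|u(t)|<\varepsilon$ on $(a,a+\delta)\cup(b-\delta,b)$, so $T_\varepsilon u$ has compact support in $(a,b)$. A routine chain-rule computation for Sobolev functions shows $T_\varepsilon u\in W^{1,p}(a,b)$ with weak derivative $u'\chi_{\{|u|>\varepsilon\}}$. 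The $L^p$ convergence $T_\varepsilon u\to u$ is immediate from the pointwise bound $|T_\varepsilon u-u|\le\varepsilon$ on $(a,b)$.

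For the convergence of the derivatives I need
\begin{equation*}
\bigl\|u'-(T_\varepsilon u)'\bigr\|_{L^p(a,b)}=\bigl\|u'\chi_{\{|u|\le\varepsilon\}}\bigr\|_{L^p(a,b)}\xrightarrow{\varepsilon\to 0+}0.
\end{equation*}
Here the key input, and the main obstacle in the argument, is the classical fact that $u'=0$ almost everywhere on $\{t:u(t)=0\}$ for $u\in W^{1,p}$; combined with $\chi_{\{|u|\le\varepsilon\}}\to\chi_{\{u=0\}}$ pointwise and the $L^p$-majorant $|u'|$, the dominated convergence theorem yields the claim. Hence $T_\varepsilon u\to u$ in $W^{1,p}(a,b)$.

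For the second stage, fix $\varepsilon>0$ and let $\eta$ be smaller than the distance from the support of $T_\varepsilon u$ to $\{a,b\}$. The standard mollification $(T_\varepsilon u)*\rho_\eta$ belongs to $C^\infty_0(a,b)$ and converges to $T_\varepsilon u$ in $W^{1,p}(a,b)$ as $\eta\to 0+$, so $T_\varepsilon u\in W^{1,p}_0(a,b)$. Since $W^{1,p}_0(a,b)$ is closed in $W^{1,p}(a,b)$ and $T_\varepsilon u\to u$ in that norm, we conclude $u\in W^{1,p}_0(a,b)$, completing the converse and hence the proposition.
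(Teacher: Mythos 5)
Your proof is correct, but it takes a genuinely different route from the paper's. The paper handles the `if' direction by a \emph{horizontal} shift: it extends $u$ by zero, splits it via a partition of unity into a piece supported near $a$ and a piece supported near $b$, translates each piece inward by $1/n$ to obtain compactly supported $W^{1,p}$ functions, and concludes by the $p$-mean continuity of translation. You instead truncate \emph{vertically}: $T_\varepsilon u=\operatorname{sgn}(u)\,(|u|-\varepsilon)_+$ is compactly supported in $(a,b)$ precisely because of \eqref{EQ:limits}, and the only nontrivial point, $\bigl\|u'\chi_{\{|u|\le\varepsilon\}}\bigr\|_{L^p(a,b)}\to0$, you settle correctly by combining the classical fact that $u'=0$ a.e.\ on $\{u=0\}$ with dominated convergence. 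Both arguments are complete. The paper's version needs only the continuity of translation in $L^p$ plus the observation that the zero extension of a one-dimensional Sobolev function with vanishing endpoint limit is again Sobolev; yours needs the chain rule for Lipschitz compositions and the vanishing of $u'$ on level sets (both available from, e.g., \cite[Corollary~2.1.8]{ziemer1989}), but it dispenses with the partition of unity and is dimension-free in spirit --- indeed it is essentially the same truncation device the paper uses in Step~3 of the proof of Theorem~\ref{TH:Lpinfty}, with $kd$ playing the role of your constant $\varepsilon$.
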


\begin{proof}
The `only if' part is a consequence of Lemma~\ref{TH:impl_in_1}. We will prove the `if' part.

Let $u\in W^{1,p}(a,b)$ be such that \eqref{EQ:limits} is satisfied. Let us denote by $\bar{u}$ the extension of $u$ by $0$ to $\R\setminus (a,b)$.
We can take two functions $\varphi_1,\varphi_2\in C^{\infty}(\R)$ such that $0\leq\varphi_1,\varphi_2\leq1$, $\operatorname{supp}\varphi_1\subset(-\infty,a+2\frac{b-a}{3})$, $\operatorname{supp}\varphi_2\subset(a+\frac{b-a}{3}, \infty)$ and $\varphi_1+\varphi_2=1$ (the partition of unity). Then we denote $u_1=\varphi_1 \bar{u}$ and  $u_2=\varphi_2 \bar{u}$. Obviously $u_1, u_2 \in W^{1,p}(a,b)$, and each satisfies \eqref{EQ:limits}. For every $n\in\N$ let us define
$$
U_n(t)=u_1\left(t-\frac1n\right)\quad\text{for $t\in\R$.}
$$
Then $U_n\in W^{1,p}_0{(a,b)}$ for sufficiently large $n\in \N$, since $U_n \in W^{1,p}(a,b)$ is compactly supported in $(a,b)$, and
$$
\lim_{n\rightarrow\infty}\left\|U_n-u_1\right\|_{W^{1,p}(a,b)}=0
$$
thanks to the $p$-mean continuity of the functions $u_1$ and $u_1'$. Thus, from the closedness of the space $W^{1,p}_0(a,b)$,  we get $u_1\in W^{1,p}_0(a,b)$. Similarly we can prove that $u_2\in W^{1,p}_0(a,b)$. Altogether,
$$
u=u_1+u_2\in  W^{1,p}_0(a,b).
$$
\end{proof}

\begin{thm}\label{TH:intersection}
Let $\Omega\subset\R^N$ be a bounded domain. In dimension $N=1$, the property
$$
W^{1,p}(\Omega)\cap W^{1,1}_0(\Omega)=W^{1,p}_0(\Omega)
$$
is satisfied for each bounded domain $\Omega$. In dimensions $N>1$ this is not true.
\end{thm}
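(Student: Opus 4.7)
My plan is to handle the positive one-dimensional statement and the higher-dimensional counterexample separately.

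For the one-dimensional case, I would observe that every bounded domain in $\R$ is an interval $(a,b)$. The inclusion $W^{1,p}_0(a,b)\subseteq W^{1,p}(a,b)\cap W^{1,1}_0(a,b)$ is immediate: the first containment is trivial, and the second follows because on the bounded interval $(a,b)$ one has $L^p(a,b)\hookrightarrow L^1(a,b)$ by H\"older's inequality, so any sequence in $C_0^\infty(a,b)$ approximating $u$ in $W^{1,p}$-norm also approximates it in $W^{1,1}$-norm. For the reverse inclusion, I would pick $u\in W^{1,p}(a,b)\cap W^{1,1}_0(a,b)$, apply Lemma~\ref{TH:impl_in_1} with $p=1$ to the function $u\in W^{1,1}_0(a,b)$ to obtain $\lim_{t\to a+}u(t)=\lim_{t\to b-}u(t)=0$, and then invoke Proposition~\ref{TH:equiv_in_1} to conclude $u\in W^{1,p}_0(a,b)$ from the $W^{1,p}$-membership together with the vanishing boundary limits.

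For $N\ge 2$, I would exhibit the punctured unit ball $\Omega=B(0,1)\setminus\{0\}\subset\R^N$ together with any exponent $p>N$ as the counterexample. Choosing $u\in C_0^\infty(B(0,1))$ with $u(0)=1$ gives $u\in W^{1,p}(\Omega)$ trivially. To verify $u\in W^{1,1}_0(\Omega)$, I would take a radial cutoff $\eta_\varepsilon\in C^\infty(\R^N)$ with $\eta_\varepsilon\equiv 0$ on $B(0,\varepsilon)$, $\eta_\varepsilon\equiv 1$ off $B(0,2\varepsilon)$, and $|\nabla\eta_\varepsilon|\le C\varepsilon^{-1}$; the products $u\eta_\varepsilon$ lie in $C_0^\infty(\Omega)$ and satisfy $\|u-u\eta_\varepsilon\|_{L^1(\Omega)}\le C\varepsilon^N$ together with $\|\nabla(u-u\eta_\varepsilon)\|_{L^1(\Omega)}\le C\varepsilon^{N-1}$, both tending to zero as $\varepsilon\to 0$ precisely because $N\ge 2$. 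Finally, to see $u\notin W^{1,p}_0(\Omega)$, I would note that any approximating sequence $\varphi_n\in C_0^\infty(\Omega)$ extends by zero to $C_0^\infty$-functions on $B(0,1)$ vanishing at the origin, so Morrey's embedding $W^{1,p}(B(0,1))\hookrightarrow C(\overline{B(0,1)})$, valid for $p>N$, would force the impossible conclusion $0=\varphi_n(0)\to u(0)=1$.

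The one-dimensional half is essentially a repackaging of Lemma~\ref{TH:impl_in_1} and Proposition~\ref{TH:equiv_in_1}, so no genuine difficulty arises there. The only delicate point is the choice of counterexample for $N\ge 2$: I need a subset of the boundary whose $1$-capacity vanishes but whose $p$-capacity is positive, and the singleton $\{0\}\subset\R^N$ with $p>N$ is the most economical such choice. The key dimensional feature being exploited is the gain $\varepsilon^{N-1}\to 0$ in the $W^{1,1}$-cutoff estimate, which breaks down precisely in the one-dimensional setting and is in a sense the reason why the claimed identity survives only for $N=1$.
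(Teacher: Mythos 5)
Your argument for the one-dimensional half coincides with the paper's: the paper likewise reduces to Proposition~\ref{TH:equiv_in_1} (whose `only if' direction is Lemma~\ref{TH:impl_in_1}) for the nontrivial inclusion and to the nesting $L^p(a,b)\hookrightarrow L^1(a,b)$ for the trivial one, so there is nothing to add there. For $N>1$ you diverge from the paper, which simply cites a counterexample from Hedberg--Kilpel\"ainen and the book of Heinonen--Kilpel\"ainen--Martio, whereas you construct one explicitly: the punctured ball $B(0,1)\setminus\{\mathbf o\}$ with $p>N$, with $u\in C_0^\infty(B(0,1))$, $u(0)=1$, shown to lie in $W^{1,1}_0(\Omega)$ via the cutoff estimate $\varepsilon^{N-1}\to0$ and excluded from $W^{1,p}_0(\Omega)$ via Morrey's embedding. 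This is correct and self-contained, and it is in fact the same mechanism the authors exploit in their Example~\ref{EX:2} (where they invoke positive $p$-capacity of a point and the Havin--Bagby theorem instead of Morrey); your version buys elementariness at the cost of generality. The one caveat worth recording: since $p$ appears as a free parameter in the statement, a reader wanting the failure for a \emph{given} $p\in(1,N]$ cannot use your example --- as the paper's own remark after Example~\ref{EX:2} notes, a point has zero $p$-capacity there and $W^{1,p}_0(B(0,1)\setminus\{\mathbf o\})=W^{1,p}_0(B(0,1))$, so one must instead remove a compact set that is $W^{1,1}$-negligible but of positive $p$-capacity; that is exactly what the cited references supply. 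If the theorem is read as asserting only that the identity can fail in each dimension $N>1$ for some $p$, your proof is complete.
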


\begin{proof}
In dimension $N=1$ one inclusion follows from Proposition~\ref{TH:equiv_in_1}, and the converse one from the nesting property of Lebesgue spaces on a bounded set.

For higher dimensions a counterexample is given in \cite{HK}, see also the book \cite{Heinonen1993}.
\end{proof}

Note that the part of the assertion of Theorem~\ref{TH:intersection} for $N=1$ is also true on a union of suitable open bounded intervals.

\begin{cor}
Let $p\in (1,\infty)$ and $\Omega\subset\R$ be a bounded domain.
Then
$$
u' \in L^p(\Omega)\qquad\text{and}\qquad\frac{u}{d}\in L^{1,\infty}_a(\Omega)
$$
if and only if
$
u\in W^{1,p}_0(\Omega).
$
\end{cor}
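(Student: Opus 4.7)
The plan is to derive this one-dimensional corollary from the machinery already assembled, specifically Theorems~\ref{TH:with_regularity} and~\ref{TH:intersection}. A bounded domain $\Omega\subset\R$ is simply a bounded open interval $(a,b)$, which trivially enjoys the outer ball portion property (balls centered at $a$ or $b$ have exactly half their measure outside $\Omega$), and whose isoperimetric function satisfies $I_{\Omega}(s)\geq 1$ for all $s\in(0,(b-a)/2]$, well above $Cs$ near $0$. Thus the domain is ``sufficiently regular'' in the sense preceding Corollary~\ref{TH:equivalence}, but in one dimension Theorem~\ref{TH:intersection} gives us~\eqref{E:intersection-condition} gratis, which is a cleaner route.

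For the forward implication, assume $u'\in L^p(\Omega)$ and $u/d\in L^{1,\infty}_a(\Omega)$. To invoke Theorem~\ref{TH:with_regularity} I need $u\in L^1(\Omega)$. Since $\Omega$ has finite measure and $p>1$, one has $u'\in L^p(\Omega)\subset L^1(\Omega)$, so the weakly differentiable $u$ admits an absolutely continuous representative on every compact subinterval of $(a,b)$ with $u(x)-u(x_0)=\int_{x_0}^{x}u'(t)\,dt$. The hypothesis $u/d\in L^{1,\infty}_a(\Omega)$ forces $u$ to be finite a.e., so one can pick an $x_0\in\Omega$ with $u(x_0)$ finite and conclude that $|u(x)|\leq |u(x_0)|+\|u'\|_{L^1(\Omega)}$ on $\Omega$, whence $u\in L^{\infty}(\Omega)\subset L^1(\Omega)$. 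Combining $u\in L^1(\Omega)$, $u'\in L^p(\Omega)$, $u/d\in L^{1,\infty}_a(\Omega)$, and~\eqref{E:intersection-condition} (supplied by Theorem~\ref{TH:intersection}), Theorem~\ref{TH:with_regularity} gives $u\in W^{1,p}_0(\Omega)$.

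For the backward implication, $u'\in L^p(\Omega)$ follows at once from $W^{1,p}_0(\Omega)\subset W^{1,p}(\Omega)$. To get $u/d\in L^{1,\infty}_a(\Omega)$, the classical one-dimensional Hardy inequality on $(a,b)$ yields $\|u/d\|_{L^p(\Omega)}\leq C\|u'\|_{L^p(\Omega)}$ for every $u\in W^{1,p}_0(\Omega)$ (verified first on $C_0^{\infty}(\Omega)$ and then extended by density). Since $\Omega$ has finite measure and $p>1$, the Lorentz-space embeddings yield $L^p(\Omega)\hookrightarrow L^{1,q}(\Omega)$ for any $q\in[1,\infty)$, and Proposition~\ref{TH:L1qsubset} then upgrades membership in $L^{1,q}(\Omega)$ to $L^{1,\infty}_a(\Omega)$. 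Thus $u/d\in L^{1,\infty}_a(\Omega)$.

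The only genuinely delicate point is the argument that $u\in L^1(\Omega)$ in the forward direction, since neither hypothesis directly supplies integrability of $u$ itself; everything else is a routine assembly of the preceding results, with the one-dimensional Hardy inequality and the Lorentz embedding handling the reverse implication.
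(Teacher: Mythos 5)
Your proposal is correct and follows essentially the same route as the paper: the forward direction via Theorem~\ref{TH:intersection} and Theorem~\ref{TH:with_regularity}, with the missing hypothesis $u\in L^1(\Omega)$ recovered from the absolute continuity of $u$ on the interval (you spell this out a bit more explicitly than the paper does), and the converse via the one-dimensional Hardy inequality, the Lorentz-space embeddings on a finite measure space, and Proposition~\ref{TH:L1qsubset}.
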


\begin{proof}
One implication is fulfilled thanks to Theorems~\ref{TH:with_regularity} and~\ref{TH:intersection}. Note that the assumption $u\in L^1(a,b)$ of Theorem~\ref{TH:with_regularity} is fulfilled thanks to the absolute continuity of $u$ on $(a,b)$. The converse implication can be proved applying a suitable version of the Hardy inequality, (see e.g. \cite{kadlec1966}, or, for dimension N=1, \cite{turcinova2017}), and using embedding theorems between Lorentz spaces and Proposition~\ref{TH:L1qsubset}.
\end{proof}

\section{Examples}\label{S:examples}
In this section we present several counterexamples presenting that assumptions of the main theorem can not be omitted.
\begin{example}
Conjunction of conditions $\frac{u}{d}\in L^{1,\infty}(\Omega)$ and $\left|\nabla u\right|\in L^p(\Omega)$ is not sufficient for $u\in W^{1,p}_0(\Omega)$ even for regular (e.g. Lipschitz) domains. This can be demonstrated using the following example.

Set $\Omega=(0,1)^N$, $N\in\N$, and $u(x)=1$ for each $x\in \Omega$. The graph of $d(x)$ is a~``pyramid'' with vertex in $(\frac12,\dots,\frac12)$, where $d(x)$ attains the value $\frac{1}{2}$. For example, in one dimension we have
$$
d(x)=\left\{
\begin{array}{l@{\quad}l}
x,& x\in(0,\frac12],\\
1-x,& x\in(\frac12,1).
\end{array}
\right.
$$
Let us compute the distribution function $\xi\mapsto \lambda^N(\{x\in\Omega;\frac{1}{d(x)}>\xi\})$. It is clearly equal to one for $\xi\in[0,2]$, thus it suffices to compute it for $\xi>2$. We obtain
\begin{align*}
\lambda^N(\{x\in\Omega;\frac{1}{d(x)}>\xi\})
&=\lambda^N(\{x\in\Omega;d(x)<1/\xi\})\\
&=\lambda^N(\Omega\setminus\{x\in\Omega;d(x)\geq1/\xi\})\\
&=1-\left(1-\frac2{\xi}\right)^N,
\end{align*}
where $(1-\frac2{\xi})^N$ is the volume of a cube, whose distance from the boundary of $\Omega$ is $\frac1{\xi}$.

Thus, and by the definition of the Lorentz norm via distribution function (see Remark~\ref{TH:Lorentz_eq}), we get
$$
\left\|\frac1d\right\|_{L^{1,\infty}(\Omega)}=\sup_{\xi>0}\xi\lambda^N(\{x\in\Omega;\frac{1}{d(x)}>\xi\})=\max\left\{2,\sup_{\xi>2}\xi\left(1-\left(1-2/\xi\right)^N\right)\right\}.
$$
Changing variables $1-\frac2{\xi}=s$, $s\in(0,1)$, we obtain
$$
\sup_{\xi>2}\xi\left(1-\left(1-2/{\xi}\right)^N\right)=\sup_{s\in(0,1)}\frac{2}{1-s}(1-s^N)=\sup_{s\in(0,1)}2(1+s+\cdots+s^{N-1})=2N,
$$
and, consequently, $\frac{u}{d}$ in $L^{1,\infty}(\Omega)$. Obviously, $\left|\nabla u\right|\in L^p(\Omega).$
However, using standard techniques such as the characterization of $W^{1,p}_0(\Omega)$ by the trace operator, we get $u\notin W^{1,p}_0(\Omega)$.
\end{example}

\begin{example}\label{EX:2}
If a bounded domain $\Omega$ is not sufficiently regular, then the conjunction of conditions $\frac{u}{d}\in L^{1,\infty}_a(\Omega)$ and $\left|\nabla u\right|\in L^p(\Omega)$ is not necessarily sufficient for $u\in W^{1,p}_0(\Omega)$.

Let $N\in\N$, $p>N>1$. Set $\Omega=B(0,1)\setminus\left\{\textbf{o}\right\}\subset \R^N$ and $u(x)=-\left|x\right|+1$. The distance function corresponding to $\Omega$ is given by
$$
d(x)
=
\left\{
\begin{array}{l@{\quad}l}
-\left|x\right|+1,& x\in B(0,1)\setminus B(0,\frac{1}{2}),\\
\left|x\right|, & x\in B(0,\frac{1}{2})\setminus\left\{\textbf{o}\right\}.
\end{array}
\right.
$$
Let us compute the distribution function $\xi\mapsto \lambda^N(\{x\in\Omega;\frac{u(x)}{d(x)}>\xi\})$. It is clearly equal to $\lambda^N(B(0,1))$ for $\xi\in[0,1)$, thus it suffices to compute it for $\xi\geq1$. This case corresponds to $x\in B(0,\frac{1}{2})\setminus\left\{\textbf{o}\right\}$ and $\frac{u(x)}{d(x)}=-1+\frac{1}{\left|x\right|}$. We obtain
\begin{align*}
\lambda^N\left(\left\{x\in\Omega;-1+\frac{1}{\left|x\right|}>\xi\right\}\right)
&=\lambda^N\left(\left\{x\in\Omega;\left|x\right|<\frac{1}{\xi+1}\right\}\right)
=\lambda^N\left(B\left(0,\frac{1}{\xi+1}\right)\right)\\
&=\omega_N\left(\frac1{\xi+1}\right)^N,
\end{align*}
where $\omega_N$ denotes the volume of the unit ball in $\R^N$.
Thus, using once again the definition of the Lorentz norm via distribution function, we get
$$
\left\|\frac{u}{d}\right\|_{L^{1,\infty}(\Omega)}=\sup_{\xi>0}\xi\lambda^N\left(\left\{x\in\Omega;\frac{u(x)}{d(x)}>\xi\right\}\right)
=
\max\left\{\omega_N,\sup_{\xi\geq1}\xi\omega_N\left(\frac1{\xi+1}\right)^N\right\}.
$$
Changing variables $\frac1{\xi+1}=s$, $s\in(0,\frac12]$, we obtain
$$
\sup_{\xi\geq1}\xi\omega_N\left(\frac1{\xi+1}\right)^N
=
\sup_{s\in (0,\frac12]}\frac{1-s}{s}\omega_N s^N
\leq
\omega_N \left(\frac12\right)^{N-1},
$$
and thus $\frac{u}{d}\in L^{1,\infty}(\Omega)$. Moreover, we have that
$$
\lim_{\xi\rightarrow\infty}\xi\lambda^N\left(\left\{x\in\Omega;\frac{u(x)}{d(x)}>\xi\right\}\right)
=
\lim_{\xi\rightarrow\infty}\xi\omega_N\left(\frac1{\xi+1}\right)^N=0
$$
and
\begin{equation*}
    \lim_{\xi\rightarrow0_+}\xi\lambda^N\left(\left\{x\in\Omega;\frac{u(x)}{d(x)}>\xi\right\}\right)
=\lim_{\xi\rightarrow0_+}\xi\omega_N=0,
\end{equation*}
and, consequently, using the definition of the absolutely continuous Lorentz norm via distribution function, $\frac{u}{d}\in L^{1,\infty}_a(\Omega)$. Obviously, $\left|\nabla u\right|\in L^p(\Omega)$, and so the conjunction of conditions of interest is satisfied. Moreover $u\in L^p(\Omega)$ and so $u\in W^{1,p}(\Omega)$. However, a point has positive $p$-capacity in $\R^N$, $p>N$, hence we arrive at a contradiction with the Havin-Bagby theorem (see e.g. \cite{HK}). In conclusion, $u\notin W^{1,p}_0(\Omega)$.
\end{example}

\begin{rem}
The preceding counterexample cannot be used for $p\leq N$ due to the fact that the $p$-capacity of a single point is zero in this case. Then, $W^{1,p}_0(B(0,1))=W^{1,p}_0(B(0,1)\setminus\{\textbf{o}\})$ (see \cite[Theorem 2.43]{Heinonen1993}), and thus any function with limit equal to zero on $\partial B(0,1)$ and $c>0$ in $\textbf{o}$ is an element of $W^{1,p}_0(B(0,1)\setminus\{\textbf{o}\})$.
\end{rem}

\appendix

\section{}

\subsection{Background results about spaces of functions with absolutely continuous norm}\label{S:ACnorm}

In this section we survey several useful results about a structure of spaces $L^{p,\infty}_a(\RR)$, $p\in[1,\infty)$.

We shall need two characterizations of the space $L^{p,\infty}_a(\RR)$, one in terms of the non-increasing rearrangement, and one in terms of the distribution function. We shall present it in Proposition~\ref{TH:ACnorm} below without a proof. A slightly modified part of this result can be found in~\cite[Theorem~8.5.3]{PKJF}, the other part can be easily verified by taking generalized inverses.

\begin{proposition}\label{TH:ACnorm}
Let $p\in[1,\infty)$ and $f\in\mathscr{M}(\RR,\mu)$. Then the following statements are equivalent:

\textup{(a)} $f\in L^{p,\infty}_a(\RR)$,

\textup{(b)} $\lim_{t\rightarrow0_+} t^{\frac{1}{p}}f^*(t)=\lim_{t\rightarrow\infty} t^{\frac{1}{p}}f^*(t)=0$,

\textup{(c)} $\lim_{\xi\rightarrow0_+} \xi\mu(\left\{x\in\mathscr{R}:\left|f(x)\right|>\xi\right\})^{\frac{1}{p}}
			=
			\lim_{\xi\rightarrow\infty} \xi\mu(\left\{x\in\mathscr{R}:\left|f(x)\right|>\xi\right\})^{\frac{1}{p}}=0$.
\end{proposition}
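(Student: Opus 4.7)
The plan is to prove the cyclic chain $(a) \Leftrightarrow (c) \Leftrightarrow (b)$.

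First I would dispatch the equivalence $(b) \Leftrightarrow (c)$, which is purely formal: the functions $\psi(t) := t^{1/p}f^{*}(t)$ and $\phi(\xi) := \xi\,\mu(\{|f|>\xi\})^{1/p}$ are linked via the fact that $f^{*}$ and the distribution function $\mu_f(\xi) = \mu(\{|f|>\xi\})$ are generalized (right-continuous) inverses. Using the standard identities $\mu_f(f^{*}(t)) \le t$ and $f^{*}(\mu_f(\xi)^-) \ge \xi$ one checks that the "corners" behave symmetrically: small $t$ corresponds to large $\xi$ and vice versa, with matching limits. Since flat stretches/jumps in one function correspond to jumps/flat stretches in the other and contribute identical products, one obtains $\limsup_{t\to 0_+}\psi(t) = \limsup_{\xi\to\infty}\phi(\xi)$ and $\limsup_{t\to\infty}\psi(t) = \limsup_{\xi\to 0_+}\phi(\xi)$, whence $(b) \Leftrightarrow (c)$.

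Next, for $(c) \Rightarrow (a)$ I would work directly with the distributional form of the quasinorm from Remark~\ref{TH:Lorentz_eq}:
\begin{equation*}
    \|f\chi_{E_k}\|_{L^{p,\infty}(\RR)} \approx \sup_{\xi>0} \xi\,\mu\bigl(E_k\cap\{|f|>\xi\}\bigr)^{1/p}.
\end{equation*}
Given $\varepsilon>0$, use $(c)$ to pick $0<\xi_1<\xi_2<\infty$ with $\phi(\xi)<\varepsilon$ for $\xi\in(0,\xi_1)\cup(\xi_2,\infty)$. On these outer ranges the integrand is dominated by $\phi(\xi)<\varepsilon$ uniformly in $k$. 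On the middle range $[\xi_1,\xi_2]$ one uses that $(c)$ at $\xi\to 0_+$ forces $\mu(\{|f|>\xi_1\})<\infty$; then $\chi_{E_k\cap\{|f|>\xi_1\}}\to 0$ $\mu$-a.e.\ on a finite-measure set, so the dominated convergence theorem yields $\mu(E_k\cap\{|f|>\xi\})\le\mu(E_k\cap\{|f|>\xi_1\})\to 0$, uniformly in $\xi\in[\xi_1,\xi_2]$. Taking $k$ large produces $\|f\chi_{E_k}\|_{L^{p,\infty}}<2\varepsilon$, establishing absolute continuity.

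For the converse $(a)\Rightarrow(c)$ I would argue by contrapositive and construct witnessing sequences $E_k\to\emptyset$ from the failure of either limit. If $\limsup_{\xi\to\infty}\phi(\xi)>0$, take $E_k=\{|f|>k\}$; since $f$ is finite $\mu$-a.e., $E_k\to\emptyset$, while for any $\xi>k$ we have $E_k\cap\{|f|>\xi\}=\{|f|>\xi\}$, so $\|f\chi_{E_k}\|_{L^{p,\infty}}\ge\sup_{\xi>k}\phi(\xi)\not\to 0$. The more delicate case is $\limsup_{\xi\to 0_+}\phi(\xi)=L>0$, which forces $\mu_f(\xi)\gtrsim(L/\xi)^p\to\infty$, hence $\mu(\operatorname{supp} f)=\infty$. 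Invoking $\sigma$-finiteness, write $\RR=\bigcup_m R_m$ with $R_m\nearrow\RR$ and $\mu(R_m)<\infty$, and set $E_k=\RR\setminus R_k$. Then $E_k\to\emptyset$. For $\xi$ so small that $\mu_f(\xi)\ge 2\mu(R_k)$, one has $\mu(E_k\cap\{|f|>\xi\})\ge\mu_f(\xi)/2$, whence $\|f\chi_{E_k}\|_{L^{p,\infty}}\ge 2^{-1/p}L/2$ for every $k$, contradicting $(a)$.

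The main obstacle is the $\xi\to 0_+$ half of $(a)\Rightarrow(c)$: one must produce sets shrinking $\mu$-a.e.\ to $\emptyset$ while still carrying the "large-measure" portion of $\{|f|>\xi\}$ as $\xi\to 0_+$. This is where $\sigma$-finiteness is indispensable; on a non-$\sigma$-finite space the complement construction breaks down. The other subtleties (non-atomicity, right-continuity of rearrangements, and the level-set/non-increasing-rearrangement interchange) are routine once the three-range decomposition in step $(c)\Rightarrow(a)$ is fixed.
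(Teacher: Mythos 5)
Your proof is correct. Note, though, that the paper does not actually prove this proposition: it cites \cite[Theorem~8.5.3]{PKJF} for one part (essentially the characterization (a)$\Leftrightarrow$(b) in terms of $f^*$) and dismisses the remaining equivalence with (c) as an exercise in generalized inverses. Your argument is therefore a genuinely self-contained alternative, and a sensible one: you prove (a)$\Leftrightarrow$(c) directly on the distribution-function side --- which is the form actually used later in the paper, e.g.\ in \eqref{EQ:second_summand} and in Example~\ref{EX:2} --- and recover (b) by the duality between $f^*$ and $\mu_f$. The three-range decomposition for (c)$\Rightarrow$(a), the witnessing sequences $E_k=\{|f|>k\}$ and $E_k=\RR\setminus R_k$ for the contrapositive, and your observation that $\sigma$-finiteness is needed precisely for the $\xi\to0_+$ half are all sound. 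Two small points to tighten in a written version: in the contrapositive of (a)$\Rightarrow$(c) you should first reduce to the case $f\in L^{p,\infty}(\RR)$ (so that $f$ is finite $\mu$-a.e.\ and $\phi$ is bounded; otherwise $\neg$(a) holds vacuously, since membership in $L^{p,\infty}_a$ presupposes $f\in L^{p,\infty}$), and in (c)$\Rightarrow$(a) you should record that the same three-range bound applied to $E=\RR$ already gives $\|f\|_{L^{p,\infty}(\RR)}<\infty$. For (b)$\Leftrightarrow$(c), the ``matching corners'' heuristic is best replaced by the two one-line inequalities $\xi t^{1/p}\le t^{1/p}f^*(t)$ for $t<\mu_f(\xi)$ and $\xi t^{1/p}\le \xi\mu_f(\xi)^{1/p}$ for $\xi<f^*(t)$, which yield the two limit identities directly; this is exactly the ``generalized inverses'' computation the paper alludes to.
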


Now we will present several useful relations between function spaces that take part in the main results.

\begin{proposition}\label{TH:L1qsubset}
Let $p,q\in[1,\infty)$. Then $L^{p,q}(\RR)\subset L^{p,\infty}_a(\RR)$.
\end{proposition}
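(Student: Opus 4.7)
The plan is to invoke the rearrangement characterization of $L^{p,\infty}_a(\mathscr{R})$ provided by Proposition~\ref{TH:ACnorm}(b), so it suffices to show that for every $f\in L^{p,q}(\mathscr{R})$ one has
$$
\lim_{t\to 0_+} t^{1/p} f^*(t) = \lim_{t\to \infty} t^{1/p} f^*(t) = 0.
$$
Since $L^{p,q}(\mathscr{R})\hookrightarrow L^{p,\infty}(\mathscr{R})$ is already a standard embedding recalled in the preliminaries, the entire task boils down to verifying the vanishing of these two limits.

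The key step I would carry out is a one-line monotonicity trick. Fix $t>0$ and observe that, since $f^*$ is non-increasing, $f^*(s)\geq f^*(t)$ for every $s\in(t/2,t)$. Consequently,
$$
\int_{t/2}^t \bigl[s^{1/p} f^*(s)\bigr]^{q}\,\frac{ds}{s}
\;\geq\; [f^*(t)]^{q}\int_{t/2}^t s^{q/p-1}\,ds
\;=\; \frac{p}{q}\bigl(1-2^{-q/p}\bigr)\bigl[t^{1/p} f^*(t)\bigr]^{q}.
$$
Denote the positive constant on the right by $c_{p,q}$. Thus
$$
\bigl[t^{1/p} f^*(t)\bigr]^{q}\;\leq\; c_{p,q}^{-1}\int_{t/2}^{t}\bigl[s^{1/p}f^*(s)\bigr]^{q}\,\frac{ds}{s}.
$$

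From the assumption $f\in L^{p,q}(\mathscr{R})$ we know that the function $g(s)=[s^{1/p}f^*(s)]^{q} s^{-1}$ is Lebesgue-integrable on $(0,\infty)$. Hence, by the absolute continuity of the Lebesgue integral, $\int_{t/2}^{t} g(s)\,ds\to 0$ both as $t\to 0_+$ (since $\int_0^t g\to 0$) and as $t\to\infty$ (since the tail $\int_{t/2}^{\infty}g$ tends to $0$). Plugging this into the displayed inequality gives $t^{1/p}f^*(t)\to 0$ in both limits, and Proposition~\ref{TH:ACnorm} then yields $f\in L^{p,\infty}_a(\mathscr{R})$, completing the proof.

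I do not anticipate any serious obstacle here: this is a routine argument in Lorentz-space theory, and the only subtlety is the monotonicity estimate on the interval $(t/2,t)$, which neatly produces the quantity $[t^{1/p}f^*(t)]^{q}$ from an integral whose total mass over $(0,\infty)$ is finite. Note that the restriction $q<\infty$ is genuinely used (the constant $c_{p,q}$ depends on $q$ being finite), which is precisely consistent with the hypothesis $q\in[1,\infty)$ of the statement.
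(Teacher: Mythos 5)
Your proposal is correct and follows essentially the same route as the paper: both reduce the claim to the vanishing of $\lim_{t\to0_+}t^{1/p}f^*(t)$ and $\lim_{t\to\infty}t^{1/p}f^*(t)$ via Proposition~\ref{TH:ACnorm}, and both obtain these limits by bounding $[t^{1/p}f^*(t)]^q$ from above by a small piece of the finite integral $\int_0^\infty s^{q/p-1}f^*(s)^q\,ds$ using the monotonicity of $f^*$ and the absolute continuity of the Lebesgue integral. The only cosmetic difference is that the paper integrates over $(0,\varepsilon)$ for the limit at zero while you use $(t/2,t)$ in both regimes.
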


\begin{proof}
Assume $f\in L^{p,q}(\RR)$. Hence, also $f\in L^{p,\infty}(\RR)$. We have
\begin{equation*}
\left(\int_0^{\infty}t^{\frac{q}{p}-1}f^*(t)^q\, d t\right)^{\frac1q}<\infty.
\end{equation*}
Since the Lebesgue integral is absolutely continuous, we obtain
\begin{equation*}
\lim_{\varepsilon\rightarrow 0+}\left(\int_0^{\varepsilon}t^{\frac{q}{p}-1}f^*(t)^q\, d t\right)^{\frac1q}=0
\qquad\text{and}\qquad
\lim_{s\rightarrow \infty}\left(\int_{\frac{s}2}^{\infty}t^{\frac{q}{p}-1}f^*(t)^q\, d t\right)^{\frac1q}=0.
\end{equation*}
Moreover, thanks to the monotonicity of $f^*$, we have
\begin{equation*}
\left(\int_0^{\varepsilon}t^{\frac{q}{p}-1}f^*(t)^q\, d t\right)^{\frac1q}
\geq
 f^*(\varepsilon)\left(\int_0^{\varepsilon}t^{\frac{q}{p}-1}\, d t\right)^{\frac1q}
=
\left(\frac{p}{q}\right)^{\frac1q} \varepsilon^{\frac{1}{p}} f^*(\varepsilon)
\end{equation*}
and
\begin{align*}
\left(\int_{\frac{s}2}^{\infty}t^{\frac{q}{p}-1}f^*(t)^q\, d t\right)^{\frac1q}
&\geq
\left(\int_{\frac{s}2}^{s}t^{\frac{q}{p}-1}f^*(t)^q\, d t\right)^{\frac1q}
\geq
 f^*(s)\left(\int_{\frac{s}2}^{s}t^{\frac{q}{p}-1}\, d t\right)^{\frac1q}\\
&=
\left(\frac{p}{q}\right)^{\frac1q}\left(1-\frac{1}{2^{\frac{q}{p}}}\right)^{\frac1q}s^{\frac{1}{p}} f^*(s).
\end{align*}
Altogether, we obtain
\begin{equation*}
\lim_{\varepsilon\rightarrow 0+}\varepsilon^{\frac{1}{p}} f^*(\varepsilon)=0
\qquad\text{and}\qquad
\lim_{s\rightarrow \infty}s^{\frac{1}{p}} f^*(s)=0,
\end{equation*}
and thus, applying Proposition~\ref{TH:ACnorm}, we get $f\in L^{p,\infty}_a(\RR)$.
\end{proof}

Now let us show that, for $p\in[1,\infty)$, $L^{p,\infty}_a(\RR)$ is strictly bigger then the union of all  $L^{p,q}(\RR)$, $q\in [1,\infty)$.

\begin{proposition}
Let $p\in[1,\infty).$ Then $\bigcup_{q<\infty}L^{p,q}(\RR)\subsetneq L^{p,\infty}_a(\RR)$.
\end{proposition}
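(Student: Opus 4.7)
The plan is to exhibit an explicit function in $L^{p,\infty}_a(\RR)\setminus\bigcup_{q<\infty}L^{p,q}(\RR)$; the inclusion is supplied by Proposition~\ref{TH:L1qsubset}, so only strictness remains. Guided by Proposition~\ref{TH:ACnorm}(b), I would look for $f$ whose non-increasing rearrangement has the form $f^*(t)=t^{-1/p}\varepsilon(t)$, where $\varepsilon$ vanishes at the endpoints of $(0,\mu(\RR))$ while $\int\varepsilon(t)^q\,\frac{dt}{t}=\infty$ for every finite $q$. Under the change of variable $s=\log t$, this reduces to finding $\eta\colon\R\to[0,\infty)$ that vanishes at $\pm\infty$ yet belongs to no $L^q(\R)$.

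A natural candidate is $\eta(s)=1/\log(|s|+e)$: it obviously tends to $0$ at $\pm\infty$, while the substitution $u=\log(|s|+e)$ converts $\int_\R\eta(s)^q\,ds$ into a constant multiple of $\int_1^\infty\frac{e^u-e}{u^q}\,du$, which diverges for every $q\in[1,\infty)$ because the exponential eventually dominates any power. Pulling back, I would set
\begin{equation*}
f^*(t)=\frac{1}{t^{1/p}\log(|\log t|+e)}\qquad\text{for }t\in(0,\mu(\RR)),
\end{equation*}
and exploit the non-atomicity and $\sigma$-finiteness of $(\RR,\mu)$ to obtain, via a standard equimeasurable-rearrangement construction, a function $f\in\mathscr{M}(\RR,\mu)$ whose non-increasing rearrangement equals $f^*$ (truncated to $(0,\mu(\RR))$ when $\mu(\RR)<\infty$).

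The verification then reduces to two direct computations. Since $t^{1/p}f^*(t)=1/\log(|\log t|+e)\to 0$ as $t\to 0_+$, and the analogous limit at infinity is either of the same form or trivial (when $\mu(\RR)<\infty$), Proposition~\ref{TH:ACnorm} places $f$ in $L^{p,\infty}_a(\RR)$. On the other hand, for any $\delta\in(0,\min\{1,\mu(\RR)\})$ and any $q\in[1,\infty)$, the substitution $u=-\log t$ yields
\begin{equation*}
\|f\|_{L^{p,q}(\RR)}^q\ \ge\ \int_0^\delta\frac{dt}{t\,\log(|\log t|+e)^q}\ =\ \int_{-\log\delta}^{\infty}\frac{du}{\log(u+e)^q}\ =\ \infty,
\end{equation*}
so $f\notin L^{p,q}(\RR)$.

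The main (and essentially only) delicate point is confirming that the prescribed $f^*$ is genuinely non-increasing across the corner of $|\log t|$ at $t=1$. A brief logarithmic-derivative check, combining $-\tfrac1{pt}$ with the contribution from $\log(|\log t|+e)$, shows that for $p\ge 1$ the derivative of $\log f^*$ is negative on each of $(0,1)$ and $(1,\infty)$; continuity at $t=1$ then extends monotonicity globally. Once this is in place, everything above is routine rearrangement-theoretic bookkeeping, and the strict inclusion $\bigcup_{q<\infty}L^{p,q}(\RR)\subsetneq L^{p,\infty}_a(\RR)$ follows.
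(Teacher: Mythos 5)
Your overall route is exactly the paper's: produce a non-increasing profile of the form $t^{-1/p}$ times a doubly-logarithmic factor vanishing at the endpoints, realize it as a rearrangement via a Sierpi\'nski-type construction, and verify membership in $L^{p,\infty}_a$ through Proposition~\ref{TH:ACnorm} and non-membership in every $L^{p,q}$ through the divergence of $\int_0^\delta \frac{dt}{t(\log(|\log t|+e))^q}$. Those two computations are correct. However, the step you yourself flag as the delicate one --- the global monotonicity of the prescribed profile --- actually fails for large $p$. On $(0,1)$, writing $v=-\log t+e$, one finds
\begin{equation*}
\frac{d}{dt}\log f^*(t)=\frac1t\left(-\frac1p+\frac{1}{v\log v}\right),
\end{equation*}
and as $t\to1^-$ we have $v\to e$, hence $v\log v\to e$. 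So for any $p>e$ there is a left neighbourhood of $t=1$ on which $v\log v<p$ and the derivative is \emph{positive}: your $f^*$ is strictly increasing there, is therefore not a non-increasing function, and cannot be the rearrangement of anything. The claim that the logarithmic derivative is negative on all of $(0,1)$ for every $p\ge1$ is false, and without it the equimeasurable-rearrangement construction does not apply.

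The gap is easily repaired, and the repair is precisely the paper's device: truncate the profile, setting $f^*(t)=0$ for $t\ge K$ with $K$ small enough (depending on $p$) that $(-\log t+e)\log(-\log t+e)>p$ on $(0,K)$ --- for instance $K=\min\{\mu(\RR),e^{-e^p}\}$ --- so that the profile is genuinely decreasing on its support. The paper works with $h(t)=t^{-1/p}(\log\log(1/t))^{-1}$ on $(0,K)$ and zero beyond, which is your function up to the harmless replacement of $\log(|\log t|+e)$ by $\log\log(1/t)$; the truncation also disposes of the behaviour at infinity without a case distinction on $\mu(\RR)$. With that single modification, the rest of your argument goes through verbatim.
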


\begin{proof}
The inclusion can be obtained directly from Proposition~\ref{TH:L1qsubset}. To refute equality we take a nonnegative function
$$
h(t)=
\left\{
\begin{array}{l@{\quad}l}
\frac{1}{t^{\frac1p} \log \log(\frac{1}{t})},& t\in (0,K),\\
0,& t\in[K,\infty),
\end{array}
\right.
$$
where $K=\min\{\mu(\RR), e^{-e^p}\}$.
Then
\begin{align*}
&h'(t)=\frac{t^{\frac1p-1}\left(1-\frac1p\log(\frac1t)\log\log(\frac1t)\right)}{t^{\frac2p}\log(\frac1t)\log^2\log(\frac1t)}<0
\end{align*}
on $(0, K)$, hence $h$ is decreasing on $(0, K)$ and so non-increasing on $(0,\infty)$. As a consequence of the Sierpi\'nski theorem (can be seen for e.g.~in \cite[Example 18-28]{pfeffer1977}) there exists a function $f\in \M(\RR,\mu)$ such that $f^*=h$. We thus have
\begin{align*}
&\lim_{t\rightarrow 0_+}t^{\frac1p}f^*(t)
=\lim_{t\rightarrow 0_+}t^{\frac1p}h(t)
=\lim_{t\rightarrow 0_+}\frac{1}{\log\log(\frac1t)}
=0.
\end{align*}
Moreover, trivially, $\lim_{t\rightarrow \infty}t^{\frac1p}f^*(t)=0$. Therefore, in view of Proposition~\ref{TH:ACnorm}, $f\in L^{p,\infty}_a(\RR)$.

Fix now $q<\infty$. Then
\begin{align*}
\int_0^{K}t^{\frac{q}{p}-1}f^*(t)^q dt
&=\int_0^{K}t^{\frac{q}{p}-1}\frac{1}{t^{\frac{q}{p}}\log^q\log(\frac1t)} dt
=\int_0^{K}\frac{1}{t\log^q\log(\frac1t)} dt\\
&=\int_{\log(K^{-1})}^{\infty} \frac{1}{\log^q s}ds
=\int_{\log\log(K^{-1})}^{\infty} \frac{e^y}{y^q }dy
=\infty.
\end{align*}
Thus $f\notin L^{p,q}(\RR)$ for each $q\in[1,\infty)$, and so we obtain $f\notin \bigcup_{q<\infty}L^{p,q}(\RR)$.
\end{proof}

\subsection{Results concerning isoperimetric profile}\label{SS:isoperimetric}

In this subsection, our first aim is to establish an inequality between the perimeter of a measurable set and sums of perimeters of its decomposition into countably many disjoint subsets. The estimate is likely to be known and we do not claim it as a new result, but we used it above and we have not been able to find a detailed proof in the existing literature, so we will insert one here for the sake of completeness. We recall that $\partial^{M}E$ denotes the essential part of $\partial E$, introduced in Section~\ref{S:preliminaries}.

\begin{lemma}\label{L:essential-is-boundary}
For every $E\subset \mathbb R^N$, one has $\partial^{M}E\subset \partial E$.
\end{lemma}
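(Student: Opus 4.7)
The plan is to prove this by contraposition: I will show that if $x \notin \partial E$, then $x \notin \partial^M E$. Since $\partial E = \overline{E} \setminus E^{\circ}$, the complement of $\partial E$ decomposes into the two open sets $\mathbb{R}^N \setminus \overline{E}$ (the exterior of $E$) and $E^{\circ}$ (the interior of $E$). I will handle each case separately, and in each case I expect the density limit to exist and equal either $0$ or $1$, thereby disqualifying $x$ from membership in $\partial^M E$.

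First I would handle the case $x \in \mathbb{R}^N \setminus \overline{E}$. Since this set is open, there exists $r_0 > 0$ such that $B(x,r_0) \subset \mathbb{R}^N \setminus \overline{E}$, which in particular is contained in $\mathbb{R}^N \setminus E$. Then for every $r \in (0, r_0)$ we have $\chi_E \equiv 0$ on $B(x,r)$, so the averaged integral is identically $0$ for small $r$, and hence its limit as $r \to 0_+$ equals $0$. By the very definition of $\partial^M E$, this excludes $x$ from $\partial^M E$.

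The symmetric case is $x \in E^{\circ}$: here there exists $r_0 > 0$ with $B(x,r_0) \subset E$, so for every $r \in (0, r_0)$ the integrand $\chi_E$ equals $1$ throughout $B(x,r)$, and the averaged integral is identically $1$. Consequently the limit equals $1$, and again $x \notin \partial^M E$. Combining both cases, $\mathbb{R}^N \setminus \partial E \subset \mathbb{R}^N \setminus \partial^M E$, which is exactly the desired inclusion $\partial^M E \subset \partial E$.

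I do not foresee a genuine obstacle here; the statement is essentially a direct consequence of unpacking the definitions. The only mildly delicate point is making sure that the density limit indeed exists and takes the value $0$ or $1$ in each case (rather than failing to exist), but both the exterior and interior of $E$ are open, which provides a neighborhood on which $\chi_E$ is literally constant, so the limit trivially exists.
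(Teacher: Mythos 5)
Your proof is correct and is essentially the contrapositive of the paper's own argument: where the paper takes $x\in\partial^{M}E$ and deduces $x\in\overline{E}$ (by extracting a sequence of points of $E$ converging to $x$ from the positivity of the upper density) and then $x\notin E^{\circ}$ (by noting the density would be $1$ on an interior ball), you start from $x\notin\partial E$ and observe that $\chi_E$ is constant on a small ball, forcing the density limit to be $0$ or $1$. Both are direct unpackings of the definitions, your case decomposition $\mathbb R^N\setminus\partial E=(\mathbb R^N\setminus\overline{E})\cup E^{\circ}$ is valid, and there is no gap.
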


\begin{proof}
Fix $x\in\partial^ME$. Then
\begin{equation*}
    \limsup_{r\to0_+}\frac{1}{|B(x,r)|}\int_{B(x,r)}\chi_E(y)\,dy >0.
\end{equation*}
Thus, there is a sequence $\{r_n\}$ such that $r_n\to0$ and $|B(x,r_n)\cap E|>0$. Choose some $x_n\in B(x,r_n)\cap E$. Then $x_n\in E$ for every $n\in\N$, and $x_n\to x$ as $n\to\infty$. Hence, $x\in\overline{E}$.

Assume that $x\in E^{\circ}$. Then one can find $r_0>0$ such that $B(x,r_0)\subset E$. But then we have
\begin{equation*}
    \lim_{r\to0_+}\frac{1}{|B(x,r)|}\int_{B(x,r)}\chi_E(y)\,dy=1.
\end{equation*}
This contradicts $x\in \partial^ME$, whence $x\notin E^{\circ}$. Consequently, $x\in \overline{E}\setminus E^{\circ} = \partial E$, as desired.
\end{proof}

\begin{lemma}\label{L:subdomains}
Let $\Omega$, $\Omega_1$ be open and such that $\Omega_1\subset \Omega$. Let $E$ be a measurable subset of $\Omega$. Denote $E_1=\Omega_1\cap E$. Then,
\begin{equation*}
    \Omega_1\cap\partial^ME_1 \subset \Omega\cap \partial^{M}E.
\end{equation*}
\end{lemma}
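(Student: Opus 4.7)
The plan is to unwind both conditions directly from the definitions, using only the openness of $\Omega_1$. Fix $x \in \Omega_1 \cap \partial^M E_1$; we need to show $x \in \Omega$ (which is immediate from $\Omega_1 \subset \Omega$) and $x \in \partial^M E$.

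The single key observation is that since $\Omega_1$ is open and $x \in \Omega_1$, there exists $r_0 > 0$ with $B(x, r_0) \subset \Omega_1$. For every $r \in (0, r_0)$ one then has
\begin{equation*}
    B(x,r) \cap E_1 = B(x,r) \cap \Omega_1 \cap E = B(x,r) \cap E,
\end{equation*}
so $\chi_{E_1}(y) = \chi_E(y)$ for $\lambda^N$-a.e.~$y \in B(x, r)$. Consequently,
\begin{equation*}
    \frac{1}{\lambda^N(B(x,r))} \int_{B(x,r)} \chi_{E_1}(y)\, dy = \frac{1}{\lambda^N(B(x,r))} \int_{B(x,r)} \chi_E(y)\, dy
\end{equation*}
for all sufficiently small $r > 0$. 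Since limiting behavior as $r \to 0_+$ depends only on arbitrarily small radii, the limit on the left (which by $x \in \partial^M E_1$ is neither $0$ nor $1$) coincides with the corresponding limit on the right. Hence the limit on the right is also neither $0$ nor $1$, meaning $x \in \partial^M E$, which together with $x \in \Omega$ yields the inclusion.

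I do not anticipate any real obstacle here; the lemma is essentially a local statement and openness of $\Omega_1$ makes the two densities literally agree on a neighbourhood of $x$. The only mild subtlety worth stating cleanly is that the definition of $\partial^M E$ via ``is neither $0$ nor $1$'' should be read as covering both the case that the limit fails to exist and the case that it exists in $(0,1)$; either way, agreement of the two density expressions on a punctured neighbourhood transfers the property from $E_1$ to $E$.
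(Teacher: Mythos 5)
Your proof is correct and rests on the same key observation as the paper's---openness of $\Omega_1$ yields a ball $B(x,r_0)\subset\Omega_1$ on which $E$ and $E_1$ coincide---but your execution is more direct: the two density quotients are literally equal for all $r<r_0$, so the ``neither $0$ nor $1$'' property (whether the limit fails to exist or lies in $(0,1)$) transfers at once, whereas the paper first detours through the topological boundary and then argues by contradiction in two separate cases (density $0$, using only $E_1\subset E$, and density $1$, using the local equality). One cosmetic slip: in your parenthetical, the limit on the left is neither $0$ nor $1$ because $x\in\partial^M E_1$, not because $x\in\partial^M E$.
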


\begin{proof}
Let $x\in\Omega_1\cap\partial^ME_1$. Since $x\in \Omega_1$ and $\Omega_1$ is open, there is an $\varepsilon>0$ such that $B(x,\varepsilon)\subset\Omega_1$. Moreover, $x\in\partial^ME_1$ implies $x\in\partial E_1$ owing to Lemma~\ref{L:essential-is-boundary}, hence there is a sequence $\{x_n\}$ of elements of $E_1\subset E$ such that $x_n\to x$. We get $x\in\overline{E}$.

Assume, for a time being, that $x\in E^{\circ}$. Take $r_0>0$ such that $B(x,r_0)\subset E$. Clearly, for every $r\in(0,\min\{\varepsilon,r_0\})$, one has $B(x,r)\subset E\cap \Omega_1=E_1$. Consequently, $x\in(E_1)^{\circ}$, which however contradicts $x\in\partial E_1$. This implies that $x\notin E^{\circ}$, and so $x\in\partial E$. Further, since $x\in\Omega_1\subset\Omega$, one also has $x\in\Omega\cap\partial E$.

Now, we prove that $x\in\partial^ME$. Assume that
\begin{equation*}
    \lim_{r\to0_+}\frac{|E\cap B(x,r)|}{|B(x,r)|} = 0.
\end{equation*}
Since $E_1\subset E$, we have
\begin{equation*}
    \lim_{r\to0_+}\frac{|E_1\cap B(x,r)|}{|B(x,r)|} = 0.
\end{equation*}
Thus, $x\notin \partial^ME_1$. Finally, assume that
\begin{equation*}
    \lim_{r\to0_+}\frac{|E\cap B(x,r)|}{|B(x,r)|} = 1.
\end{equation*}
Since, for every $r\in(0,\varepsilon)$, one has $E\cap B(x,r)\subset E\cap\Omega_1=E_1$, we get $E\cap B(x,r)=E_1\cap B(x,r)$, which enforces
\begin{equation*}
    \lim_{r\to0_+}\frac{|E_1\cap B(x,r)|}{|B(x,r)|} = 1.
\end{equation*}
Thus, once again, $x\notin\partial^ME_1$. In each case we arrived at a contradiction. This yields that
\begin{equation*}
    \lim_{r\to0_+}\frac{|E\cap B(x,r)|}{|B(x,r)|}\quad \text{is nether $0$ nor $1$,}
\end{equation*}
in other words, $x\in \partial ^ME$. We conclude that $x\in\Omega\cap\partial^ME$.
\end{proof}

\begin{lemma}\label{L:multiperimeter}
Let $\Omega_i\subset\R^N$, $i\in\N$, be open, pairwise disjoint, and such that $\Omega=\left(\bigcup_{i=1}^{\infty}\overline{\Omega_i}\right)^{\circ}$. Assume that $E\subset \Omega$ is measurable. Denote $E_i=E\cap\Omega_i$ for $i\in\N$. Then,
\begin{equation*}
    P(E,\Omega) \ge \sum_{i=1}^{\infty} P(E_i,\Omega_i).
\end{equation*}
\end{lemma}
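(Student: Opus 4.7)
The plan is to reduce the inequality to the already-established inclusion in Lemma~\ref{L:subdomains} together with monotonicity and countable additivity of the Hausdorff measure on disjoint Borel sets. First I would verify that the hypothesis of Lemma~\ref{L:subdomains} applies to each $\Omega_i$, namely that $\Omega_i \subset \Omega$. This is immediate: since $\Omega_i$ is open, we have $\Omega_i = (\Omega_i)^\circ \subset \bigl(\bigcup_{j=1}^\infty \overline{\Omega_j}\bigr)^\circ = \Omega$.

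Having this, Lemma~\ref{L:subdomains} gives, for each $i \in \N$,
\begin{equation*}
\Omega_i \cap \partial^M E_i \subset \Omega \cap \partial^M E.
\end{equation*}
Because the sets $\Omega_i$ are pairwise disjoint, so are the sets $\Omega_i \cap \partial^M E_i$. Consequently, the countable union $\bigcup_{i=1}^\infty (\Omega_i \cap \partial^M E_i)$ is a disjoint union, and it is still contained in $\Omega \cap \partial^M E$.

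Applying the $(N-1)$-dimensional Hausdorff measure and using its countable additivity on pairwise disjoint (Borel) subsets together with monotonicity yields
\begin{equation*}
\sum_{i=1}^\infty P(E_i, \Omega_i)
= \sum_{i=1}^\infty \mathcal{H}^{N-1}(\Omega_i \cap \partial^M E_i)
= \mathcal{H}^{N-1}\!\left(\bigcup_{i=1}^\infty (\Omega_i \cap \partial^M E_i)\right)
\leq \mathcal{H}^{N-1}(\Omega \cap \partial^M E)
= P(E, \Omega),
\end{equation*}
which is the required estimate. The only genuinely non-routine step was already carried out in Lemma~\ref{L:subdomains}; everything else reduces to measure-theoretic bookkeeping, so I do not anticipate a substantial obstacle here beyond being careful that the sets $\Omega_i \cap \partial^M E_i$ are Borel (which follows from $\partial^M E_i$ being Borel as a level set of the Lebesgue density function, and $\Omega_i$ being open).
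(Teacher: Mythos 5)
Your proposal is correct and follows essentially the same route as the paper: apply Lemma~\ref{L:subdomains} to each $\Omega_i$, observe that the sets $\Omega_i\cap\partial^M E_i$ are pairwise disjoint, and conclude by countable additivity and monotonicity of $\mathcal{H}^{N-1}$. Your explicit verification that $\Omega_i\subset\Omega$ and the remark on Borel measurability are harmless additions that the paper leaves implicit.
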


\begin{proof}
Since $\Omega_i\cap \partial ^ME_i\subset\Omega\cap\partial E$, and the sets $\Omega_i\cap\partial^ME_i$ are pairwise disjoint, we obtain
\begin{align*}
    P(E,\Omega) &= \int_{\Omega\cap\partial^ME}\,d\mathcal H^{N-1}(x) \ge \int_{\bigcup(\Omega_i\cap\partial^ME_i)}\,d\mathcal H^{N-1}(x)
        \\
    &= \sum_{i=1}^{\infty}\int_{\Omega_i\cap\partial^ME_i}\,d\mathcal H^{N-1}(x) = \sum_{i=1}^{\infty}P(E_i,\Omega_i).
\end{align*}
\end{proof}

The second goal of this subsection is to state and prove a lemma concerning isoperimetric profile of a rectangle, quite natural and likely to be of independent interest, and which moreover proved to be useful in the analysis of Example~\ref{EX:skyscrapers}.

\begin{lemma}\label{TH:I_rectangle}
Let $a\in(0,1)$ and let $Q$ be a planar rectangle congruent to $(0,1)\times(0,a)$. Then
$$
I_Q(s)\geq \sqrt{2as}\text{ for }s\in\left[0,\frac{a}{2}\right].
$$
\end{lemma}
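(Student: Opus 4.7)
\emph{Proof plan.} Up to a rigid motion, assume $Q = (0, 1) \times (0, a)$. My plan is to use Steiner symmetrization to reduce to a one-dimensional profile, and then to close by an elementary case analysis combined with a product-type inequality.

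The first step is to Steiner-symmetrize $E$ with respect to the vertical axis $x = \tfrac12$ of $Q$. The resulting set $E^*$ lies in $Q$, has measure $s$, and obeys $P(E^*, Q) \leq P(E, Q)$; explicitly, $E^* = \{(x, y) \in Q : |x - \tfrac12| < \tfrac12 \ell(y)\}$, where $\ell(y) := |E_y|$ is a $\mathrm{BV}$ function on $(0, a)$ with values in $[0, 1]$ and $\int_0^a \ell \,dy = s$. The perimeter $P(E^*, Q)$ decomposes as an arc-length integral $2 \int_U \sqrt{1 + \tfrac14 \ell'(y)^2}\,dy$ over $U := \{0 < \ell < 1\}$ plus horizontal contributions $|\ell(y_0+) - \ell(y_0-)|$ at each jump of $\ell$ inside $(0, a)$. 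Using $\sqrt{1 + u^2/4} \geq 1$ on the arc-length piece yields $P(E^*, Q) \geq 2|U|$, and using $\sqrt{1 + u^2/4} \geq |u|/2$ combined with the jump terms yields $P(E^*, Q) \geq V(\ell)$, the total variation of $\ell$ on $(0, a)$.

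I then split into cases. If $\ell > 0$ a.e.\ on $(0, a)$, then from $|U| + |\{\ell = 1\}| = a$ and $|\{\ell = 1\}| \leq s$ one gets $|U| \geq a - s \geq a/2$, so $P(E^*, Q) \geq a$, and $a \geq \sqrt{2as}$ is equivalent to the hypothesis $s \leq a/2$. Otherwise $\ell$ vanishes on a set of positive measure. If moreover $\ell$ attains the value $1$, then $V(\ell) \geq 1$, so $P(E^*, Q) \geq 1$; here $\sqrt{2as} \leq 1$ follows from $s \leq a/2 \leq 1/(2a)$ using $a \leq 1$. In the remaining situation $\ell \in [0, 1)$ a.e.\ with $M := \sup \ell \in (0, 1]$, the variation bound gives $P \geq M$ and from $s = \int_U \ell \leq M|U|$ the height bound gives $P \geq 2s/M$, so $P^2 \geq 2s$ and thus $P \geq \sqrt{2s} \geq \sqrt{2as}$. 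In every case $P(E, Q) \geq P(E^*, Q) \geq \sqrt{2as}$.

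The main obstacle is the careful BV bookkeeping of the perimeter formula for $E^*$, particularly when $\{\ell = 1\}$ has positive measure: there the top and bottom arc curves $x = \tfrac12 \pm \tfrac12 \ell(y)$ degenerate onto $\partial Q$ and contribute nothing to $P(E^*, Q)$, but this is precisely compensated by the horizontal jumps at the boundary of $\{\ell = 1\}$ inside $(0, a)$. Verifying that the three ingredients---the arc-length lower bound, the variation lower bound, and the area identity $s = \int \ell$---all remain valid in every regime requires some care, but once that is done the product estimate $P^2 \geq M \cdot 2|U| \geq 2s$ (or the strip estimate $P \geq a$) closes the argument.
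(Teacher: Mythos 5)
The argument breaks at its very first step: Steiner symmetrization about the axis $x=\tfrac12$ does \emph{not} decrease the \emph{relative} perimeter $P(\cdot,Q)$, and the inequality $P(E^*,Q)\le P(E,Q)$ on which everything else rests is false. Take $E=(0,\varepsilon)\times(0,a)$ with $\varepsilon\le\tfrac12$, so that $\lambda^2(E)=\varepsilon a\le a/2$. Then $Q\cap\partial^M E=\{\varepsilon\}\times(0,a)$, hence $P(E,Q)=a$ (the other three sides of $E$ lie on $\partial Q$ and are not counted), whereas $E^*$ is the centered strip $\left(\tfrac{1-\varepsilon}{2},\tfrac{1+\varepsilon}{2}\right)\times(0,a)$ with $P(E^*,Q)=2a$. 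The classical Steiner inequality is an inequality for the perimeter relative to cylinders over the slicing axis, here the open strip $\R\times(0,a)$; in passing from that to the perimeter relative to $Q$ one discards the boundary mass sitting on the two vertical sides of $Q$, and that discarded mass can be strictly larger for $E$ than for $E^*$, as the example shows. This is not a repairable technicality of bookkeeping: the true minimizers of the relative isoperimetric problem in a rectangle are quarter-discs in the corners, and symmetrizing such a set about the central axis strictly \emph{increases} its relative perimeter, so no proof of the lemma can open with this reduction.

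The damage propagates to the quantitative core of your argument. The estimate $P\ge 2|U|$ (two interior boundary points over each height $y$ with $0<\ell(y)<1$) is precisely what the symmetrization was supposed to buy; for a general $E\subset Q$ one only gets one essential boundary point of the slice $E_y$ inside $(0,1)$, i.e. $P(E,Q)\ge|U|$. Rerunning your Case~C with this weaker bound gives $P^2\ge M\cdot(s/M)=s$, hence only $P\ge\sqrt{s}$, which dominates $\sqrt{2as}$ only when $a\le\tfrac12$. Your variation bound $P(E,Q)\ge V(\ell)$ and the three-way case analysis are sound, and the idea of playing the ``width'' bound against the ``height'' bound is a good one; but to close the proof you must either use a symmetrization adapted to the relative perimeter (for instance, reflect $E$ across the sides of $Q$ and symmetrize the reflected, periodized set), or do what the paper does: invoke the known solution of the relative isoperimetric problem in a rectangle, whose minimizers are corner quarter-discs for small measure and straight vertical cuts otherwise, so that $I_Q(s)\ge\min\{\sqrt{\pi s},\,a\}$, and then verify the elementary inequality $\min\{\sqrt{\pi s},a\}\ge\sqrt{2as}$ for $s\le a/2$ and $a<1$.
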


\begin{proof}
    With no loss of generality we may assume that $Q=(0,1)\times(0,a)$. Fix $t\in[0,\frac{a}{2}]$. Then a classical argument can be used to obtain that
    \begin{equation}\label{E:lower-bound-isoperimetric}
        P(E,Q) \ge P(E_t,Q),
    \end{equation}
    for every measurable subset $E$ of $Q$ such that $\lambda^2(E)=t$, in which
    \begin{equation*}
        E_t=
        \begin{cases}
            \left\{[x,y]\in\R^2: x^2+y^2< \frac{4t}{\pi},\ 0<x,y<\sqrt{\frac{4t}{\pi}}\right\} &\text{if $t\in[0,\frac{a^2}{\pi}]$}
                \\
            \left\{[x,y]\in\R^2: x\in\left(0,\frac{t}{2}\right), y\in(0,a)\right\} &\text{if $t\in(\frac{a^2}{\pi},\frac{a}{2}]$.}
        \end{cases}
    \end{equation*}
    Then
    \begin{equation*}
        P(E_t,Q) =
        \begin{cases}
            \sqrt{\pi t} &\text{if $t\in[0,\frac{a^2}{\pi}]$,}
            \\
            a &\text{if $t\in\big(\frac{a^2}{\pi},\frac{a}{2}\big]$,}
        \end{cases}
    \end{equation*}
    whence the function $\Psi$, defined by
    \begin{equation*}
        \Psi(t) = P(E_t,Q) \quad\text{for $t\in\left[0,\frac{a}{2}\right]$,}
    \end{equation*}
    is nondecreasing on $[0,\frac{a}{2}]$. By~\eqref{E:lower-bound-isoperimetric} and the monotonicity of $\Psi$,
    \begin{equation*}
        I_Q(s)
            = \inf\left\{P(E,Q):\lambda^2(E)\in\left[s,\frac{a}{2}\right]\right\} \ge \inf\left\{\Psi(t):t\in\left[s,\frac{a}{2}\right]\right\} = \Psi(s)\quad\text{for $s\in\left[0,\frac{a}{2}\right]$.}
    \end{equation*}
    Thus, for $s\in[0,\frac{a^2}{\pi}]$, we get, owing to the fact that $a<1$,
    \begin{equation*}
        I_Q(s) = \sqrt{\pi s} \ge \sqrt{\pi as} > \sqrt{2as},
    \end{equation*}
    while, for $s\in\big(\frac{a^2}{\pi},\frac{a}{2}\big]$, we have
    \begin{equation*}
        I_Q(s) = a \ge \sqrt{2as}.
    \end{equation*}
    The desired inequality now follows from the latter two estimates.
\end{proof}

\end{document}